\newtheorem{thm}{Theorem}
\newtheorem{lem}[thm]{Lemma}
\newtheorem{rem}[thm]{Remark}
\newtheorem{pro}[thm]{Proposition}
\newtheorem{assumption}[thm]{Assumption}
\newcommand{\RR}{{\mathbb R}}
\newcommand{\ep}{\varepsilon}
\newcommand{\la}{\lambda}
\newcommand{\Ricbar}{\overline{\text{\rm Ric}(\gamma)}_t}
\newcommand{\DVbar}{\overline{\nabla V_y^{\la}(t,\gamma)}_t}
\newcommand{\grad}{\text{\rm grad}}
\newcommand{\cxy}{c_{x,y}}
\newcommand{\cyx}{c_{y,x}}
\newcommand{\Rreverse}{R^{\leftarrow}}
\renewcommand{\Im}{{\rm Image}}
\newcommand{\esssup}{{\rm esssup}}
\newcommand{\dirichlet}{e^{\lambda}_{Dir,2,P_{x,y}({\mathcal D})}}
\numberwithin{equation}{section}
\begin{document}
\newcounter{aaa}
\newcounter{bbb}
\newcounter{ccc}
\newcounter{ddd}
\newcounter{eee}
\newcounter{xxx}
\newcounter{xvi}
\newcounter{x}
\setcounter{aaa}{1}
\setcounter{bbb}{2}
\setcounter{ccc}{3}
\setcounter{ddd}{4}
\setcounter{eee}{32}
\setcounter{xxx}{10}
\setcounter{xvi}{16}
\setcounter{x}{38}
\title
{Semi-classical limit of the generalized second lowest eigenvalue of 
Dirichlet Laplacians on small domains in path spaces}
\author{Shigeki Aida\\
Mathematical Institute\\
Tohoku University,
Sendai, 980-8578, JAPAN\\
e-mail: aida@math.tohoku.ac.jp}
\date{}
\maketitle
\begin{abstract}
Let $M$ be a complete Riemannian manifold.
Let $P_{x,y}(M)$ be the space of continuous paths on
$M$ with fixed starting point $x$ and ending point $y$.
Assume that $x$ and $y$ is close enough such that
the minimal geodesic $\cxy$ between $x$ and $y$
is unique.
Let $-L_{\la}$ be the Ornstein-Uhlenbeck operator with
the Dirichlet boundary condition on a 
small neighborhood of the geodesic 
$\cxy$ in $P_{x,y}(M)$.
The underlying measure $\bar{\nu}^{\la}_{x,y}$
of the $L^2$-space is 
the normalized probability measure of the restriction
of the pinned Brownian motion measure on the neighborhood 
of $\cxy$ and 
$\la^{-1}$ is the variance parameter of the 
Brownian motion.
We show that the generalized second lowest
eigenvalue of $-L_{\la}$ divided by $\la$
converges to the lowest eigenvalue of the
Hessian of the energy function 
of the $H^1$-paths at $\cxy$
under the small variance limit (semi-classical limit)
$\la\to\infty$.
\end{abstract}

\section{Introduction}
Let $(M,g)$ be an $n$-dimensional complete Riemannian manifold.
Let $x,y\in M$.
Let $\nu_{x,y}^{\la}$ be the pinned Brownian motion measure on the
pinned path space $P_{x,y}(M)=C([0,1]\to M~|~\gamma(0)=x,\gamma(1)=y)$,
where $\la$ is a positive parameter such that
the transition probability of the Brownian motion is given by
$p(t/\la,x,y)$, where $p(t,x,y)$ denotes the heat kernel
of the $L^2$-semigroup $e^{t\Delta/2}$ on $L^2(M,dx)$.
Here $dx$ denotes the Riemannian volume and
$\Delta$ is the Laplace-Beltrami operator.
The parameter $\la$ coincides with the inverse number of the variance 
parameter of the Brownian motion on a Euclidean space
which is obtained by the stochastic-development of the Brownian motion on
$M$ to the tangent space $T_xM$ at the starting point $x$.
Let $D_0$ be the $H$-derivative on $P_{x,y}(M)$ and consider the
Dirichlet form in $L^2(P_{x,y}(M),d\nu^{\la}_{x,y})$:
\begin{equation}
{\mathcal E}(F,F)=\int_{P_{x,y}(M)}|D_0F(\gamma)|^2d\nu^{\la}_{x,y}.
\end{equation}
Note that the $H$-derivative is defined by using the Levi-Civita
connection.
Let $-L_{\la}$ be the non-negative generator of the above Dirichlet form.
This is a generalization of the usual Ornstein-Uhlenbeck operator
in the case where $M$ is a Euclidean space.
We are interested in the spectral properties of the operator
$-L_{\la}$. 
For example, are there eigenvalues?, are there spectral gaps,
where the essential spectrum is ?, and so on.
In particular, the limit $\la\to\infty$ is a kind of semi-classical
limit since formally the Brownian motion measure is written as
$$
d\nu^{\la}_{x,y}(\gamma)=\frac{1}{Z_{\la}}
\exp\left(-\frac{\la}{2}\int_0^1|\gamma'(t)|^2dt\right)
d\gamma.
$$
Therefore one may expect that the asymptotic behavior of the low-lying spectrum
of $-L_{\la}$ is related with the critical points of 
the energy function $E(\gamma)=\frac{1}{2}\int_0^1|\gamma'(t)|^2dt$, that is,
the set of geodesics.
However, of course, this naive observation is not true in general since
the space of paths $P_{x,y}(M)$ is non-compact and unbounded set and
we need to put some assumptions at the infinity.
Hence we study the Ornstein-Uhlenbeck operators on a certain domain
with Dirichlet boundary condition in this paper
which still captures essential points of the problem.
We explain what kind of subsets of $P_{x,y}(M)$ we are interested in.
Let ${\mathcal D}=B_r(y)=\{z\in M~|~d(z,y)<r\}$,
where $d$ stands for the Riemannian distance function on $(M,g)$.
We assume that $x\in {\mathcal D}$ and the closure of ${\mathcal D}$ does not 
contain the cut-locus of $y$.
Actually we put stronger assumptions on ${\mathcal D}$
which we will explain in the next section.
Let $P_{x,y}({\mathcal D})$ be the all paths $\gamma\in P_{x,y}(M)$
satisfying $\gamma(t)\in {\mathcal D}$~$(0\le t\le 1)$.
Let $\cxy=\cxy(t)$~$(0\le t\le 1)$ be 
the uniquely defined minimal geodesic between
$x$ and $y$.
Then $\cxy\in P_{x,y}({\mathcal D})$ and
the set $P_{x,y}({\mathcal D})$ is an open neighborhood of 
$\cxy$.
We study the Ornstein-Uhlenbeck operator $-L_{\la}$ with
Dirichlet boundary condition
(we call just it a Dirichlet Laplacian) 
on $L^2(P_{x,y}({\mathcal D}),d\bar{\nu}^{\la}_{x,y})$,
where $\bar{\nu}^{\la}_{x,y}$ is the normalized probability measure 
of the restriction of $\nu^{\la}_{x,y}$ to $P_{x,y}({\mathcal D})$.
It is well-known that the pinned measure converges weakly to the atomic measure
$\delta_{\cxy}$ at $\cxy$.
This implies that the bottom of spectrum of the Dirichlet Laplacian
converges to $0$.
The aim of this paper is to show that the second generalized 
lowest eigenvalue divided by $\la$ converges to
the bottom of the spectrum of the Hessian of the energy function
$E(\gamma)$ at $\cxy$ as $\la\to \infty$
which is naturally conjectured by an
analogy of semi-classical analysis in finite dimensional cases.
We note that a rough lower and upper bound was 
already given by Eberle~\cite{eberle3}.
See also \cite{eberle1,eberle2}.
Also I gave a rough lower bound by using a Clark-Ocone-Haussman
(=COH) formula
in \cite{aida-coh2}.
Our result is a refinement of such results and the proof is based on
the COH formula.

The paper is organized as follows.
In Section 2, 
we explain the assumptions on the Riemannian manifold and the domain
${\mathcal D}$.
Next we give the definitions of Dirichlet Laplacian 
, the second generalized lowest eigenvalue and
we state our main theorem.
Also, we recall our COH formula.
In Section 3, we identify the semi-classical limit of the
coefficient operator $A(\gamma)_{\la}$ in the COH formula.
To this end, the Jacobi fields along the geodesic
$\cxy$ play important roles.
In particular we show that the limit of
$A(\gamma)_{\la}$ is related with the square root of the Hessian of
the energy function of $E$ at $\cxy$.
This observation is important in our proof.
After these preparations, we prove our main theorem in
Section 4.

\section{Statement of results}
We assume that the Ricci curvature of $(M,g)$
is bounded which implies the non-explosion of the
Brownian motion.
Actually, our problem is local one and this assumption is not 
important for our problem.
We mention this local property later.

The Sobolev space 
$H^{1,2}(P_{x,y}(M),\nu^{\la}_{x,y})$ is the completion of
the vector spaces of smooth cylindrical functions
by the $H^1$-Sobolev norm:
$$
\|F\|_{H^1}=\left(\|F\|_{L^2(\nu^{\la}_{x,y})}^2+
{\mathcal E}(F,F)\right)^{1/2},
$$
where ${\mathcal E}$ is the Dirichlet form which is defined in the
introduction.
As already defined, let ${\cal D}$ be a metric open ball 
centered at $y$ with radius $r$.
Throughout this paper, we always assume the
following.
\begin{assumption}
$(1)$~Let us denote the set of cut-locus of $y$
by ${\mathrm Cut}(y)$.
Then there are no intersection of the closure of
${\cal D}$ and ${\mathrm Cut}(y)$.
Also $x\in {\mathcal D}$.

\noindent
$(2)$
The Hessian of $k(z)=\frac{1}{2}d(z,y)^2$
satisfies that
$
\inf_{z\in {\cal D}}\nabla^2k(z)>1/2.
$
\end{assumption}
Since $\nabla_z^2k(z)|_{z=y}=I_{T_yM}$,
the above assumption (1), (2) hold in a small domain containing
$y$.
Under the above assumptions, clearly 
the minimal geodesic $\cxy=\cxy(t)$~
$(0\le t\le 1)$~$(\cxy(0)=x, \cxy(1)=y)$
belong to ${\mathcal D}$.
Let
\begin{equation}
P_{x,y}\left({\mathcal D}\right)=\left\{\gamma\in P_{x,y}(M)~|~
\mbox{$\gamma(t)\in {\mathcal D}$ for all $0\le t\le 1$}\right\}
\end{equation}
which is an open neighborhood of $\cxy$ in $P_{x,y}(M)$.
By the assumption, there are no geodesics other than $\cxy$
in $P_{x,y}({\mathcal D})$.
Let
\begin{equation}
H^{1,2}_0(P_{x,y}({\mathcal D}),\nu^{\la}_{x,y})=
\left\{F\in H^{1,2}(P_{x,y}(M),\nu^{\la}_{x,y})~|~
\mbox{$F=0$ outside $P_{x,y}({\mathcal D})$}
\right\}
\end{equation}
which is a closed linear subspace of $H^{1,2}(P_{x,y}(M),\nu^{\la}_{x,y})$.
We denote the normalized probability 
$d\nu^{\la}_{x,y}/\nu^{\la}_{x,y}\left(P_{x,y}({\mathcal D})\right)$
on $P_{x,y}({\mathcal D})$ by
$d\bar{\nu}^{\la}_{x,y}$.
The non-positive generator $L_{\la}$ corresponding to the 
densely defined closed form 
$$
{\cal E}(F,F),~~F\in H^{1,2}_0(P_{x,y}({\mathcal D}),\bar{\nu}^{\la}_{x,y})
$$
in the Hilbert space
$L^2(P_{x,y}({\mathcal D}),\bar{\nu}^{\la}_{x,y})$
is the Dirichlet Laplacian on 
$P_{x,y}({\mathcal D})$.
Let 
\begin{eqnarray}
e^{\la}_{Dir,1,P_{x,y}({\mathcal D})}&=&
\inf_{F(\ne 0)\in H^{1,2}_0\left(P_{x,y}({\mathcal D})\right)}
\frac{\int_{P_{x,y}({\mathcal D})}|D_0F|^2d\bar{\nu}^{\la}_{x,y}}
{\|F\|_{L^2(\bar{\nu}^{\la}_{x,y})}^2}.
\end{eqnarray}
This is equal to $\inf\sigma(-L_{\la})$, where
$\sigma(-L_{\la})$ denotes the spectral set of $-L_{\la}$.
Next we introduce 
\begin{eqnarray}
\lefteqn{\dirichlet}\nonumber\\
& &=
\sup_{G(\ne 0)\in L^2(\bar{\nu}^{\la}_{x,y})}
\inf\Biggl\{
\frac{\int_{P_{x,y}({\mathcal D})}|D_0F|^2d\bar{\nu}^{\la}_{x,y}}
{\|F\|_{L^2(\bar{\nu}^{\la}_{x,y})}^2}~\Bigg |~
F\in H^{1,2}_0\left(P_{x,y}({\mathcal D})\right), \nonumber\\
& &\qquad\qquad (F,G)_{L^2(\bar{\nu}^{\la}_{x,y})}=0\Biggr\}
\end{eqnarray}
which is the generalized second lowest eigenvalue of
$-L_{\la}$.
Let $P_{x,y}(M)^{H^1}$ be the subset of $P_{x,y}(M)$
consisting of $H^1$-paths.
Let us consider the energy function of $H^1$-path:
\begin{equation}
E(\gamma)=\frac{1}{2}\int_0^1|\gamma'(t)|^2dt
\qquad \gamma\in P_{x,y}(M)^{H^1}.\label{energy function}
\end{equation}
We use the same notation $D_0$ for the $H$-derivative
of the smooth function on $P_{x,y}(M)^{H^1}$.
The following is our main theorem.

\begin{thm}\label{main theorem}
We have
\begin{equation}
\lim_{\la\to\infty}
\frac{\dirichlet}{\la}=e_0,
\end{equation}
where
$e_0=\inf\sigma((D_0^2 E)(\cxy))$.
\end{thm}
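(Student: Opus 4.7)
The plan is to prove matching upper and lower bounds for $\dirichlet/\la$ as $\la\to\infty$, whose common value is $e_0$.

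\textbf{Upper bound via trial functions.} By the Courant--Fischer identity, $\dirichlet=\inf_{V}\sup_{F\in V\setminus\{0\}}R(F)$, where $V$ ranges over $2$-dimensional subspaces of $H^{1,2}_0(P_{x,y}({\mathcal D}))$ and $R$ denotes the Rayleigh quotient. I would take $V=\mathrm{span}\{F_0,F_1\}$, with $F_0$ a smooth cutoff equal to $1$ on most of $P_{x,y}({\mathcal D})$ and vanishing near its boundary, and $F_1$ a mean-zero trial function; then the supremum over $V$ reduces to $\max(R(F_0),R(F_1))$. Since $\bar\nu^\la_{x,y}$ concentrates at $\cxy$, the first Rayleigh quotient $R(F_0)$ is exponentially small, so the task is to construct $F_1$ with $R(F_1)\le\la e_0+o(\la)$. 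I would pick a unit $H$-eigenvector $\phi$ of $(D_0^2E)(\cxy)$ at eigenvalue $e_0$, use the tubular Jacobi-field coordinate system $\Phi$ around $\cxy$ built in Section 3, and set $F_1(\gamma)=\langle\phi,\Phi(\gamma)\rangle_H\cdot\chi(\gamma)-c_\la$, with $\chi$ a smooth cutoff supported in $P_{x,y}({\mathcal D})$ and equal to $1$ on a $\la^{-\alpha}$-neighborhood of $\cxy$ (some $\alpha\in(0,1/2)$), and $c_\la$ the recentering constant. Because $\bar\nu^\la_{x,y}$ fluctuates on scale $\la^{-1/2}$ about $\cxy$, Laplace-type asymptotics reduce $\|F_1\|^2$ and $\int|D_0F_1|^2\,d\bar\nu^\la_{x,y}$ to Gaussian integrals against the Hessian, yielding $R(F_1)=\la e_0+o(\la)$.

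\textbf{Lower bound via COH.} The COH formula recalled in Section 2 represents a centered functional as a stochastic integral involving $D_0F$ and the coefficient operator $A(\gamma)_\la$. Schematically,
\[
F(\gamma)-\int F\,d\bar\nu^\la_{x,y}=\int_0^1\{\text{adapted projection of }A(\gamma)_\la D_0F\}\cdot dW_s,
\]
and the It\^o isometry gives the Poincar\'e-type bound
\[
\Big\|F-\int F\,d\bar\nu^\la_{x,y}\Big\|_{L^2(\bar\nu^\la_{x,y})}^2\le\esssup_\gamma\|A(\gamma)_\la\|_{\mathrm{op}}^2\int|D_0F|^2\,d\bar\nu^\la_{x,y}.
\]
The analysis in Section 3, which identifies the semi-classical limit of $A(\gamma)_\la$ in terms of (the inverse square-root of) $(D_0^2E)(\cxy)$, should translate into $\esssup_\gamma\|A(\gamma)_\la\|_{\mathrm{op}}^2\le(\la e_0)^{-1}(1+o(1))$. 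This is exactly what the $\sup$-$\inf$ definition demands once $G$ is chosen to be the (approximate) ground state of $-L_\la$ (essentially the constant), because the orthogonality condition then becomes mean-zero; hence $\dirichlet/\la\ge e_0-o(1)$.

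\textbf{Main obstacle.} The delicate step is obtaining the bound on $\|A(\gamma)_\la\|_{\mathrm{op}}$ essentially uniformly in $\gamma\in P_{x,y}({\mathcal D})$. Section 3 pins down the limit exactly at $\cxy$, but for paths far from the geodesic the Jacobi equation driving $A(\gamma)_\la$ is deformed by curvature, and a naive bound would be too large. The remedy should be to first truncate $F$ to a $\la^{-\alpha}$-tube around $\cxy$ through a smooth cutoff, controlling the discarded portion with an a priori lower bound such as Eberle's~\cite{eberle3}, and only then apply the sharp COH estimate on the tube where the Hessian approximation is valid. A secondary point is that $\dirichlet$ is a $\sup$-$\inf$ rather than a single eigenvalue, so the Poincar\'e bound must apply to all mean-zero $F\in H^{1,2}_0$ uniformly; this is automatic from the It\^o-isometry argument once $\esssup_\gamma\|A(\gamma)_\la\|_{\mathrm{op}}$ is controlled. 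Dirichlet boundary compatibility in the trial function $F_1$ is handled by the cutoff $\chi$ without affecting the leading-order Rayleigh quotient.
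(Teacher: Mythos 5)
Your high-level plan --- a trial-function upper bound and a COH-based lower bound with IMS localization --- matches the architecture of the paper's proof, but the two halves contain gaps that the paper's actual argument is specifically designed to avoid.

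\textbf{Upper bound.} The paper does \emph{not} compute the denominator $\|\tilde F_\ep - (\tilde F_\ep,\Omega)\Omega\|^2_{L^2}$ by Laplace-type Gaussian asymptotics; it computes it via the COH formula itself. The trial function is $F_\ep=\sqrt\la\,(b-l_\xi,\psi_\ep)_{H^1_0}$, and the key computation is that $D_0 F_\ep'\approx\sqrt\la\,(I+T)\varphi_\ep$, while applying $A(\gamma)_\la\to(S^{-1})^\ast$ and the identity $(S^{-1})^\ast(I+T)=S$ yields variance $\approx\|S\varphi_\ep\|^2=((I+T)\varphi_\ep,\varphi_\ep)$. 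Your proposal to ``reduce to Gaussian integrals against the Hessian'' is not innocuous: the Gaussian approximation of $\bar\nu^\la_{x,y}$ in your Jacobi-field coordinates would require identifying the correct covariance operator, which here involves the SDE structure linking the anti-development $b$ to the driving Brownian motion $w$; a na\"ive Gaussian heuristic in the ``obvious'' coordinate gives variance $((I+T)^{-1}\varphi_\ep,\varphi_\ep)\approx 1/e_0$, not $e_0$, so the answer depends delicately on which coordinate is Gaussian. The COH formula supplies exactly this correction. A second gap: you pick ``a unit $H$-eigenvector $\phi$ of $(D_0^2E)(\cxy)$ at eigenvalue $e_0$'', but as the paper's Remark notes, when the curvature along $\cxy$ is negative, $e_0=1$ is in the essential spectrum and is not an eigenvalue, so $\phi$ may not exist. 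The paper uses an approximate minimizer $\varphi_\ep$ with $|\|S\varphi_\ep\|^2-e_0|\le\ep$, which always exists.

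\textbf{Lower bound.} The skeleton is right (COH $\Rightarrow$ Poincar\'e, IMS localization to validate the $A(\gamma)_\la\to(S^{-1})^\ast$ approximation in a tube), but three points differ from the paper and one is an error. (i)~Your intermediate claim $\esssup_\gamma\|A(\gamma)_\la\|^2_{\mathrm{op}}\le(\la e_0)^{-1}(1+o(1))$ is dimensionally wrong: $\|A(\gamma)_\la\|_{\mathrm{op}}^2\to\|(S^{-1})^\ast\|_{\mathrm{op}}^2=1/e_0$, a $\la$-independent constant. The $1/\la$ must come from the It\^o isometry of $w$ (whose variance per unit time is $1/\la$), which your displayed Poincar\'e bound omits. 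The two mistakes happen to cancel in the final conclusion, but as written the intermediate step is false. (ii)~The paper's cutoff $\chi_{1,\kappa}$ is in the Brownian rough-path norm with a \emph{fixed} small parameter $\kappa$ (sent to zero after $\la\to\infty$), not a $\la^{-\alpha}$-tube in path distance; the rough-path continuity theorem is what converts ``$\gamma$ in a $\kappa$-tube'' into a quantitative bound on $\overline{\nabla^2k(\cxy)}_t-\overline{\nabla^2k(\gamma)}_t$, which is what Lemma~\ref{perturbation of M} needs. (iii)~The discarded piece $F\chi_{2,\kappa}$ is handled not by importing an a priori spectral-gap bound of Eberle's but by a Gross/GNS-type inequality derived from the log-Sobolev inequality (\ref{lsi1}), giving a bound of order $\la^2\delta$ on $\mathcal E(F\chi_{2,\kappa},F\chi_{2,\kappa})/\|F\chi_{2,\kappa}\|^2$, which dominates $\la e_0$. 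Your suggestion ``control the discarded portion with Eberle's lower bound'' would not by itself yield a bound that beats $\la e_0$; one needs the exponentially small measure outside the tube, which is exactly what the log-Sobolev argument exploits.
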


\begin{rem}
If the sectional curvature on each points
of the geodesic $\cxy$ is positive, then
$\inf\sigma(D_0^2 E(\cxy))<1$ 
and the bottom of the spectrum is an eigenvalue
and is not an essential spectrum.
While the curvature is strictly negative, 
$\inf\sigma(D_0^2 E(\cxy))=1$
and $1$ is not an eigenvalue and belongs to essential spectrum.
This suggests that the second lowest eigenvalue, or more generally,
some low-lying spectrum of the Dirichlet Laplacian 
on $P_{x,y}({\mathcal D})$
over a positively curved manifold belongs to the discrete spectrum.
Also if some isometry group acts $M$ with the fixed points
$x$ and $y$, we may expect the discrete spectrum have some
multiplicities.
We show these kind of results in the case where $M$ is a compact
Lie group in a forthcoming paper.
As for general Riemannian manifold cases, we need more works.
\end{rem}

In the proof of Theorem~\ref{main theorem}, we use 
a short time behavior of logarithmic derivative of heat kernels
which is due to Malliavin and Stroock.

\begin{thm}[Malliavin-Stroock~\cite{ms}]\label{malliavin-stroock}
Assume that $M$ is compact and let $z\in \mathrm{Cut}(y)^c$.
Then
\begin{equation}
\lim_{t\to 0}t\nabla^2_z\log p(t,y,z)=-\nabla^2_zk(z)
\label{short time asymptotics}
\end{equation}
uniformly on any compact subset of $\mathrm{Cut}(y)^c$.
\end{thm}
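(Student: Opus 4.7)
\medskip

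The plan is to deduce the pointwise limit from the Minakshisundaram--Pleijel short-time asymptotic expansion of the heat kernel off the cut locus, and then upgrade it to uniform convergence on compact subsets of $\mathrm{Cut}(y)^c$ by controlling the remainder and its $z$-derivatives.

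First I would recall that on a compact Riemannian manifold, for $z\in\mathrm{Cut}(y)^c$ one has an asymptotic expansion of the form
\begin{equation*}
p(t,y,z)=\frac{1}{(2\pi t)^{n/2}}\exp\!\left(-\frac{d(y,z)^2}{2t}\right)\bigl(u_0(y,z)+t\,u_1(y,z)+\cdots+t^N u_N(y,z)+t^{N+1}R_N(t,y,z)\bigr),
\end{equation*}
valid with $u_0>0$ (the van Vleck--Morette determinant) and with the remainder $R_N$ and its $z$-derivatives bounded uniformly in $t\in(0,1]$ on any compact subset of $\mathrm{Cut}(y)^c$. Taking logarithms gives
\begin{equation*}
\log p(t,y,z)=-\frac{n}{2}\log(2\pi t)-\frac{d(y,z)^2}{2t}+\log u_0(y,z)+O(t),
\end{equation*}
so after multiplying by $t$,
\begin{equation*}
t\log p(t,y,z)=-\frac{nt}{2}\log(2\pi t)-k(z)+t\log u_0(y,z)+O(t^2).
\end{equation*}
The first term is independent of $z$, the second is exactly $-k(z)$, and the remaining contributions vanish with $t$. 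Formally applying $\nabla_z^2$ therefore yields
\begin{equation*}
t\nabla_z^2\log p(t,y,z)=-\nabla_z^2 k(z)+t\,\nabla_z^2\log u_0(y,z)+O(t^2),
\end{equation*}
which gives the claimed limit at each fixed $z$.

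The hard part, and the whole content beyond a formal computation, is to justify that $\nabla_z^2$ may be applied term by term and that the $O(t)$-remainder survives this differentiation with bounds that are uniform on compact subsets of $\mathrm{Cut}(y)^c$. This requires two ingredients: uniform derivative bounds on the coefficients $u_j$ and on the remainder $R_N$, and the strict positivity of $u_0$ (so that $\log p$ is smooth in $z$ near the points under consideration and one can divide by $p$ safely). Both follow from the standard parametrix construction of Minakshisundaram--Pleijel together with Duhamel's principle, but they can alternatively be obtained by the probabilistic argument in Malliavin--Stroock~\cite{ms}: the Bismut--Elworthy--Li formula represents $\nabla_z p(t,y,z)/p(t,y,z)$ and $\nabla_z^2 p(t,y,z)/p(t,y,z)$ as conditional expectations involving Malliavin-type integration-by-parts weights on the Brownian bridge, and large-deviation estimates for the bridge off the cut locus show that after multiplication by $t$ these expectations converge to the derivatives of $-k(z)$, uniformly on compacta. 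Once this uniform control of the derivatives is in place, passing to the limit in the displayed expansion completes the proof.
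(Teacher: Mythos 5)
The paper does not actually prove this statement: it is quoted as an external result of Malliavin and Stroock \cite{ms}, so there is no in-paper argument to compare against. Your Minakshisundaram--Pleijel route is the natural analytic approach and the formal computation (take logarithms, multiply by $t$, apply $\nabla_z^2$) is correct, but the entire content of the theorem lies in the step you yourself flag as the hard part: one needs the expansion to hold with $z$-derivative bounds on all the coefficients $u_j$ \emph{and} on the remainder $R_N$, uniformly in $t\in(0,1]$ on compact subsets of $\mathrm{Cut}(y)^c$ --- that is, with $d(y,z)$ bounded away from zero, not merely near the diagonal --- together with a uniform lower bound on $u_0$. Asserting that these estimates ``follow from the standard parametrix construction together with Duhamel's principle'' substantially understates the difficulty; obtaining derivative control of the off-diagonal remainder on the full non-cut-locus region is a genuine theorem (Azencott, Ben Arous, Malliavin--Stroock), not a routine corollary of the textbook near-diagonal expansion. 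The probabilistic argument you describe at the end (Bismut-type integration by parts on the Brownian bridge plus large deviations) is in fact the argument actually carried out in the cited reference, so you have correctly identified where the real proof lives. As a pointer to the literature your sketch is adequate; as a self-contained proof it still rests on an unproved uniform-$C^2$ remainder estimate, which is precisely what one is citing Malliavin--Stroock for.
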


Clearly, the same result holds on ${\mathbb R}^n$ with a
Riemannian metric which coincides with the Euclidean metric
outside a certain compact subset.
Of course, 
similar result might hold in more general non-compact cases but
the result for the perturbation of the Euclidean metric
is enough for our purpose because
we are concerned with Dirichlet Laplacian on a small domain.
We explain this meaning more precisely.
We already consider a metric ball $B_r(y)$ which includes
$x$ in $M$.
In addition to $(M,g)$,
let $(M',g')$ be another Riemannian manifold and 
$x',y'$ be points on $M'$.
Let $d'$ be the distance function on $(M',g')$.
Let $r_{\ast}$ be a positive number which is greater than $r$.
We denote by $B_{r_{\ast}}(y')$ the metric ball centered at $y'$
with radius $r_{\ast}$ in $M'$.
Assume that $B_{r_{\ast}}(y')$ is diffeomorphic to an open 
Euclidean ball.
Also assume that $x'\in B_{r_{\ast}}(y')$ and
there exists an bijective Riemannian isometry $\Phi$
from $B_{r_{\ast}}(y)$ to $B_{r_{\ast}}(y')$ such that
$\Phi(x)=x'$ and $\Phi(y)=y'$.
Let us define $P_{x',y'}(B_{r}(y'))$ and
the normalized probability measure 
$\bar{\nu}^{\la}_{x',y'}$ on $P_{x',y'}(B_{r}(y'))$
in a similar manner to
$P_{x,y}(B_r(y))$ and so on.
Define a mapping $\Psi$ from $P_{x,y}(B_{r_{\ast}}(y))$ to
$P_{x',y'}(B_{r_{\ast}}(y')))$ by $(\Psi\gamma)(t)=\Phi(\gamma(t))$~
$\gamma\in P_{x,y}(B_{r_{\ast}}(y))$.
Note that the image of $P_{x,y}(B_r(y))$ by $\Psi$ is
exactly $P_{x',y'}(B_r(y'))$.
Then using the uniqueness of the solution of the stochastic differential equation,
we see that
\begin{itemize}
\item[(1)]~$\Psi : (P_{x,y}(B_r(y)),\bar{\nu}^{\la}_{x,y})
\to (P_{x',y'}(B_r(y')), \bar{\nu}^{\la}_{x',y'})$
and its inverse map are measure preserving map.
\item[(2)]~Let $F\in H^{1,2}(P_{x',y'}(M'))$.
If $F\in H^{1,2}_0(P_{x',y'}(B_r(y')))$, then
$$
\tilde{F}(\gamma):=
(F\circ \Psi)(\gamma) \chi(\sup_{0\le t\le 1}d'(\Psi(\gamma)(t),y))
\in H^{1,2}_0(P_{x,y}(B_r(y))),
$$
where $\chi=\chi(t)$ is a non-negative smooth function such that
$\chi(t)=1$ for $t\le \frac{r+r_{\ast}}{2}$ and
$\chi(t)=0$ for $t\ge \frac{r+2r_{\ast}}{3}$.
Moreover $\|F\|_{L^2(\bar{\nu}^{\la}_{x',y'})}
=\|\tilde{F}\|_{L^2(\bar{\nu}^{\la}_{x,y})}$
and $\|D_0F\|_{L^2(\bar{\nu}^{\la}_{x',y'})}
=\|D_0\tilde{F}\|_{L^2(\bar{\nu}^{\la}_{x,y})}$.
\end{itemize}
The above observation implies that
$$
e^{\la}_{Dir,2,P_{x,y}(B_r(y))}=e^{\la}_{Dir,2,P_{x',y'}(B_r(y'))}.
$$
Hence, there are some freedom of varying the Riemannian metric
outside a certain compact subset to study our problem.
Hence, we may assume that 
$M$ is diffeomorphic to ${\mathbb R}^n$
and the Riemannian metric is flat outside a certain bounded
subset and the short time asymptotics in (\ref{short time asymptotics})
holds.
The key ingredient of the proof of Theorem~\ref{main theorem}
is a version of the Clark-Ocone-Haussman formula in
\cite{aida-coh2} which can be extended to the above non-compact
${\mathbb R}^n$ case with a nice Riemannian metric.
Since the formula is strongly related with the heat kernel $p(t,x,y)$ 
on $M$ itself, the above observation is important.

Let us recall the COH formula in
\cite{aida-coh2}.
See also \cite{chl}, \cite{gong-ma}, \cite{aida-coh}.
Let ${\mathfrak F}_t=\sigma(\{\gamma(s)~|~0\le s\le t\})$.
Let
$\tau(\gamma)_t : T_xM\to T_{\gamma(t)}M$ 
be the stochastic parallel translation along the semi-martingale
$\gamma(t)$ under $\nu^{\la}_x$ which is defined by the Levi-Civita
connection.
Then $b(t)=\int_0^t\tau(\gamma)_s^{-1}\circ d\gamma(s)$
is an ${\mathfrak F}_t$-Brownian motion with the covariance
$E^{\nu^{\la}_x}[(b(t),u)(b(s),v)]=(u,v)\frac{t\wedge s}{\la}$
~$(u,v\in T_xM)$ on $T_xM$ under $\nu^{\la}_x$.
We recall the notion of the trivialization.
Let $T\in \Gamma((\otimes^pTM)\otimes(\otimes^qT^{\ast}M))$ be a
$(p,q)$-tensor on $M$.
Using the (stochastic)-parallel translation
$\tau(\gamma)_t : T_xM\to T_{\gamma(t)}M$,
$T(\gamma(t))\in (\otimes^pT_{\gamma(t)}M)\otimes
(\otimes^qT_{\gamma(t)}^{\ast}M)$
can be naturally identified with
a tensor in $\otimes^pT_{x}M\otimes^qT_x^{\ast}M$
which we denote by $\overline{T(\gamma)}_t$.
For example, for the time dependent vector field
$V_y^{\la}(t,z)=\grad_z\log p\left(\frac{1-t}{\la},y,z\right)$
~$(0\le t<1)$
we denote
$$
\overline{V_y^{\la}(t,\gamma)}_t=
\tau(\gamma)_t^{-1}V_y^{\la}(t,\gamma(t))
\in T_xM.
$$
Also we denote
$\DVbar=\tau(\gamma)_t^{-1}\nabla_{z}V_y^{\la}(t,z)|_{z=\gamma(t)}$.
More explicitly,
\begin{equation}
\DVbar=
\tau(\gamma)_t^{-1}\nabla_z\grad_z \log p\left(\frac{1-t}{\la},y,z\right)
\Bigg |_{z=\gamma(t)}\tau(\gamma)_t.
\end{equation}

Let 
$w(t)=b(t)-\frac{1}{\la}\int_0^t\overline{V_y^{\la}(s,\gamma)}_sds$.
This process is defined for $t<1$ and it is not difficult to check that
this can be extended continuously up to $t=1$.
Let $\mathcal{N}^{x,y,t}$ be the set of all null sets of
$\nu_{x,y}|_{{\mathfrak F}_t}$ and set
${\mathfrak G}_t={\mathfrak F}_t\vee \mathcal{N}^{x,y,1}$.
Then $w$ is an ${\mathfrak G}_t$-adapted Brownian
motion for $0\le t\le 1$ such that
$E^{\nu^{\la}_{x,y}}
[\left(w(t),u\right)_{T_xM}\left(w(s),v\right)_{T_xM}]
=\frac{t\wedge s}{\la}(u,v)$ for any
$u,v\in T_xM$.
Let
\begin{equation}
K(\gamma)_{\la,t}=
-\frac{1}{2\la}\Ricbar
+\frac{1}{\la}\DVbar.
\end{equation}
Let $M(\gamma)_{\la,t}$ be the linear mapping on $T_xM$ 
satisfying the differential equation:
\begin{eqnarray}
M(\gamma)_{\la,t}'&=&K(\gamma)_{\la,t}M(\gamma)_{\la,t}
\quad 0\le t\le 1,\\
M(\gamma)_{\la,0}&=&I.
\end{eqnarray}
Using $M$ and $K$, we define
\begin{eqnarray*}
J(\gamma)_{\la}\varphi(t)&=&
(M(\gamma)_{\la,t}^{\ast})^{-1}\int_t^1
M(\gamma)_{\la,s}^{\ast}K(\gamma)_{\la,s}\varphi(s)ds.
\end{eqnarray*}
Then $J(\gamma)_{\la}$ is a bounded linear operator 
on $L^2$ for every $\gamma$.
Also let 
\begin{equation}
A(\gamma)_{\la}=I_{L^2}+J(\gamma)_{\la}.
\end{equation}

Now, we state our Clark-Ocone-Haussman formula for
$F\in H^{1,2}_0(P_{x,y}({\mathcal D}))$ 
and its immediate consequences.
We refer the readers to \cite{clw1} \cite{clw2}
for further related
developments in analysis in path spaces.

\begin{thm}\label{coh formula}~

\noindent
$(1)$~Let
\begin{equation}
\xi_{\la}=\esssup\left\{\|A(\gamma)_{\la}\|_{op}~|~\gamma\in
P_{x,y}({\cal D})\right\}.
\end{equation}
Then there exists $\la_0>0$ such that
$\sup_{\la\ge \la_0}\xi_{\la}<\infty$.

\noindent
$(2)$~For any $F\in H^{1,2}_0(P_{x,y}({\mathcal D}))$,
$D_0F(\gamma)=0$ for $\nu^{\la}_{x,y}$-almost all 
$\gamma\in P_{x,y}({\mathcal D})^c$.

\noindent
$(3)$~
Let $\la\ge \la_0$.
For any $F\in H^{1,2}_0(P_{x,y}({\mathcal D}))$,
the following COH formula holds:
$$
E^{\nu^{\la}_{x,y}}[F|{\mathfrak G}_t]=
E^{\nu^{\la}_{x,y}}[F]+
\int_0^t\left(H(s,\gamma),dw(s)\right),\quad 0\le t\le 1,
$$
where
\begin{equation}
H(s,\gamma)=
E^{\nu^{\la}_{x,y}}\left[
A(\gamma)_{\la}(D_0F(\gamma)')(s)| {\mathfrak G}_s\right].
\quad\label{coh1}
\end{equation}
$(\ref{coh1})$ denotes the predictable projection
and $D_0F(\gamma)'_t=\frac{d}{dt}(D_0F)(\gamma)_t$.

\noindent
$(4)$~
The following inequalities hold.

\begin{eqnarray}
E^{\bar{\nu}^{\la}_{x,y}}\left[
F^2\log \left(F^2/\|F\|^2_{L^2(\bar{\nu}_{x,y}^{\la})}\right)
\right]
&\le&
\frac{2\xi_{\la}}{\la}E^{\bar{\nu}^{\la}_{x,y}}\left[
|D_0F|^2\right],\label{lsi1}\\
\frac{\la}{\xi_{\la}}
E^{\bar{\nu}^{\la}_{x,y}}\left[\left(F-E^{{\nu}^{\la}_{x,y}}[F]\right)^2
\right]
&\le& E^{\bar{\nu}^{\la}_{x,y}}\left[
|D_0F|^2\right].\label{poincare1}
\end{eqnarray}
\end{thm}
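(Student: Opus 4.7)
The plan is to treat the four parts of Theorem~\ref{coh formula} in order, with the Clark-Ocone-Haussman identity recalled from \cite{aida-coh2} on the ambient pinned path space serving as the backbone and the essential new step being the uniform operator-norm bound (1), obtained via Theorem~\ref{malliavin-stroock}.

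For part (1), I would analyse $K(\gamma)_{\la,t}$ pointwise in $t$. The Ricci term $-\frac{1}{2\la}\Ricbar$ is $O(1/\la)$ under the bounded-Ricci assumption. For the dominant piece $\frac{1}{\la}\DVbar$, the substitution $\sigma=(1-t)/\la$ rewrites it as $(1-t)^{-1}\sigma\,\tau(\gamma)_t^{-1}\nabla_z^2\log p(\sigma,y,\gamma(t))\tau(\gamma)_t$. Theorem~\ref{malliavin-stroock}, in the non-compact flat-at-infinity variant justified earlier in the text, yields $\sigma\nabla_z^2\log p(\sigma,y,\cdot)\to -\nabla^2_z k(\cdot)$ uniformly on ${\mathcal D}$; combined with Assumption 2.1(2), this gives $K(\gamma)_{\la,t}\le -\frac{c}{1-t}I$ with some fixed $c>1/2$ for all large $\la$ and all $\gamma\in P_{x,y}({\mathcal D})$. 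From the ODE for $M(\gamma)_{\la,t}$, this produces the decay $\|M(\gamma)_{\la,s}M(\gamma)_{\la,t}^{-1}\|_{op}\lesssim ((1-s)/(1-t))^{c}$ for $s\ge t$; inserted into the integral defining $J(\gamma)_{\la}$ and combined with $\|K(\gamma)_{\la,s}\|_{op}\lesssim 1/(1-s)$, a Schur-type argument gives $\|J(\gamma)_\la\|_{op}\le C$ uniformly in $\gamma$ and $\la\ge\la_0$, whence $\xi_\la$ is bounded.

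For part (2), I would invoke closability of the $H$-derivative: for smooth cylindrical $F$ vanishing outside $P_{x,y}({\mathcal D})$, the derivative $D_0F$ manifestly vanishes there as well, and the property persists under the $H^{1,2}$-limits that define $H^{1,2}_0(P_{x,y}({\mathcal D}))$. For part (3), I would recall the COH identity on the full pinned path space established in \cite{aida-coh2}; extending any $F\in H^{1,2}_0(P_{x,y}({\mathcal D}))$ by zero to $P_{x,y}(M)$ yields an element of $H^{1,2}(P_{x,y}(M))$, and part (2) ensures the integrand $A(\gamma)_\la D_0F'$ is supported on $\{\gamma\in P_{x,y}({\mathcal D})\}$, so the ambient formula specializes to the stated one after renormalization by $\nu^\la_{x,y}(P_{x,y}({\mathcal D}))$.

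Part (4) is then a standard consequence of (3): It\^o isometry applied to $F-E^{\nu^{\la}_{x,y}}[F]=\int_0^1(H(s,\gamma),dw(s))$, combined with conditional Jensen and the operator bound from (1), yields the Poincar\'e inequality, while the logarithmic Sobolev inequality follows from the same martingale representation by applying It\^o's formula to $u\log u$ with $u_t=E^{\nu^{\la}_{x,y}}[F^2|{\mathfrak G}_t]$ in the spirit of Capitaine-Hsu-Ledoux, $\xi_\la$ entering as the multiplicative constant. The main obstacle throughout is part (1): Assumption 2.1(2) is used precisely to push the exponent $c$ strictly above $1/2$, which is the threshold making $J(\gamma)_\la$ a bounded operator on $L^2$ as $t\to 1$, and the Malliavin-Stroock asymptotics must be applied uniformly in $\gamma\in P_{x,y}({\mathcal D})$ --- this is where the geometric assumptions on ${\mathcal D}$ earn their keep.
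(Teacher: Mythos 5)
The paper does not prove Theorem~\ref{coh formula} at all: it is explicitly ``recalled'' from the companion paper \cite{aida-coh2} (see the sentence ``Let us recall the COH formula in \cite{aida-coh2}'' and ``Now, we state our Clark-Ocone-Haussman formula \dots and its immediate consequences''), so there is no in-paper proof to compare against. That said, your reconstruction is the natural one and, to the extent it can be checked against the ingredients assembled in Sections~2--3, it hits the essential point. For (1), you have correctly identified that Theorem~\ref{malliavin-stroock} together with Assumption~2.1(2) forces the symmetric part of $K(\gamma)_{\la,t}$ below $-\frac{c}{1-t}I$ with $c>1/2$ once $\la$ is large, that this yields $\|M(\gamma)_{\la,s}M(\gamma)_{\la,t}^{-1}\|_{op}\lesssim((1-s)/(1-t))^c$ for $s\ge t$, and that $c>1/2$ is exactly the Schur/Hardy threshold for the kernel $(1-t)^{-c}(1-s)^{c-1}\mathbf{1}_{\{s\ge t\}}$ to act boundedly on $L^2(0,1)$; this is precisely why Assumption~2.1(2) is stated with the constant $1/2$, and Lemma~\ref{perturbation of M} later repeats this mechanism quantitatively.

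Two small cautions. For (2), the space $H^{1,2}_0(P_{x,y}({\mathcal D}))$ is \emph{defined} in the paper as $\{F\in H^{1,2}:F=0\ \text{outside}\ P_{x,y}({\mathcal D})\}$, not as the closure of cylindrical functions supported in $P_{x,y}({\mathcal D})$ (a smooth cylindrical function depending on finitely many coordinates essentially cannot vanish outside $P_{x,y}({\mathcal D})$ without being trivial). The right tool is therefore the strong locality of the $H$-derivative Dirichlet form (if $F\in H^{1,2}$ vanishes a.e.\ on an open set then so does $D_0F$), rather than closability and density; the conclusion is the same but the route you describe would need this replacement. For (4), the It\^o-isometry/Capitaine--Hsu--Ledoux argument you sketch naturally produces $\xi_\la^2$ (the square of the operator norm), consistent with the factor $\bigl(\|(S^{-1})^{\ast}\|_{op}+C\ep\bigr)^2/\la$ used in the proof of Theorem~\ref{main theorem}; the single power of $\xi_\la$ appearing in \eqref{lsi1}--\eqref{poincare1} should therefore be checked against the convention in \cite{aida-coh2} (it may be that $\xi_\la$ is meant as the essential sup of $\|A(\gamma)_\la\|_{op}^2$, or that a square has been dropped). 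Neither of these affects the qualitative conclusion nor the use made of the theorem in Section~4.
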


(\ref{poincare1}) implies that $e^{\la}_{Dir,2,P_{x,y}({\mathcal D})}$
diverges to $+\infty$ at least of the order of $O(\la)$.
Therefore, $e^{\la}_{Dir,1,P_{x,y}({\mathcal D})}$
is a simple eigenvalue.
We denote the normalized non-negative
eigenfunction (ground state function)
by $\Omega$.
It is clear that $\Omega\in H^{1,2}_0(P_{x,y}({\mathcal D}),
\bar{\nu}^{\la}_{x,y})$.
Also we have
\begin{eqnarray}
e^{\la}_{Dir,2,P_{x,y}({\mathcal D})}&=&
\inf\Biggl\{
\frac{\int_{P_{x,y}({\mathcal D})}
|D_0(F-(\Omega,F)\Omega)|^2d\bar{\nu}^{\la}_{x,y}}
{\|F-(\Omega,F)\Omega\|_{L^2(\bar{\nu}^{\la}_{x,y})}^2}
~\Bigg |
~F\in H^{1,2}_0(P_{x,y}(\mathcal D))~\nonumber\\
& &\qquad\mbox{and}~
\|F-(\Omega,F)\Omega\|_{L^2(\bar{\nu}^{\la}_{x,y})}\ne 0
\Biggr\}.
\end{eqnarray}
It is plausible that $\Omega$ is a strictly positive for
$\nu^{\la}_{x,y}$ almost all $\gamma$
which 
follows from the 
positivity improving property of the corresponding
$L^2$-semigroup.
However, we do not need such a property in this paper
and we do not consider such a problem.

\section{Square root of Hessian of the energy function and
Jacobi fields}

Let $\xi$ be the tangent vector at $x$ such that
$\exp_x(t\xi)=\cxy(t)$~$(0\le t\le 1)$, where
$\exp_x$ stands for the exponential mapping at $x$.
Clearly it holds that $d(x,y)=\|\xi\|_{T_xM}$.
We denote $\cyx(t)=\cxy(1-t)$ which is a reverse geodesic path from
$y$ to $x$.
By the assumption that
$x$ is not in the cut-locus of $y$,
when $\la\to\infty$, the pinned Brownian motion measure converges weakly
to the atomic measure $\delta_{\cxy}$ at $\cxy$.
Also by Theorem~\ref{malliavin-stroock},
when $\gamma$ and $\cxy$ are close and $\la$ is large enough,
$K(\gamma)_{\la,t}$ can be approximated with
$K(t)$ 
where
\begin{equation}
K(t)=-\frac{1}{1-t}\overline{\nabla^2k(\cxy)}_t.
\end{equation}
Hence when $\la\to\infty$,
the coefficient operator $I+J(\gamma)_{\la}$ in the COH formula
converges to the corresponding
non-random $I+J_0$ (or $(S^{-1})^{\ast}$) 
which is defined using the Hessian of 
the square of the distance function $k$ along the geodesic $\cxy$.
In fact, this observation leads our main results.
We refer the readers for the notations $J_0$ and 
$(S^{-1})^{\ast}$ to Lemma~\ref{S explicit form} and
Lemma~\ref{perturbation of M}.
In order to see the explicit expression of
the Hessian of
$k(z)$~$(z\in \cxy)$, we recall the notion of
Jacobi fields.

Let $R$ be the curvature tensor and define $R(t)=
\overline{R(\cxy)}_t(\cdot,\xi)(\xi)$
which is a linear mapping on $T_xM$.
Also we define $\Rreverse(t)=R(1-t)$.
Let $v\in T_xM$ and
$W(t,v)$ be the solution to the following ODE:
\begin{equation}
W''(t,v)+\Rreverse(t)W(t,v)=0~~0\le t\le 1,
~~
W(0,v)=0,~ W'(0,v)=v.
\end{equation}
Since $t\to W(t,v)$ is linear,
by denoting the corresponding matrix by $W(t)$, 
we may write $W(t,v)=W(t)v$.
Of course, $W(0)=0, W'(0)=I$.
By the assumption that there are no cut-locus on $\{\cyx(t)\}$,
$W(t)$ is invertible linear map for all $0<t\le 1$ and
$J(t,v)=W(t)W(1)^{-1}v$ is the solution to
$$
J''(t,v)+\Rreverse(t)J(t,v)=0,~~J(0,v)=0,~J(1,v)=v
$$
and
$(\nabla^2k(\cyx))_1(v,v)=(J'(1,v),v)=(W'(1)W(1)v,v)$.
Let $0<T\le 1$.
We can obtain explicit form of the Jacobi field along
$\cyx(t)$~$(0\le t\le T)$ with given terminal value at $T$ using
$W$.
Let $J_T(t,v)=W(Tt)W(T)^{-1}v$.
Then $J_T(t,v)$~$0\le t\le 1$ satisfies 
the Jacobi equation
\begin{equation}
J_T''(t)+\Rreverse(tT)T^2J_T(t)=0,~~
J_T(0)=0,~ J_T(1)=v.
\end{equation}
Hence
$\overline{\nabla^2k(\cyx)}_{t}(v,v)=t\left(W'(t)W(t)^{-1}v,v\right)$.
Therefore the Hessian of $k$ at $\cyx(t)$ is given by
$$
\overline{\nabla^2k(\cyx)}_{t}=
tW'(t)W(t)^{-1}.
$$
This can be checked by the following argument.
It suffices to show that
$A(t)=tW'(t)W(t)^{-1}$ is a symmetric operator for $0<t\le 1$.
We have
\begin{eqnarray}
A'(t)&=&W'(t)W(t)^{-1}+tW''(t)W(t)^{-1}
-tW'(t)W(t)^{-1}W'(t)W(t)^{-1}\nonumber\\
&=&-t\Rreverse(t)-\frac{A(t)^2}{t}+\frac{A(t)}{t}.\label{eq for A}
\end{eqnarray}
Note that $\lim_{t\to 0}A(t)=I$.
If we extend $A=A(t)$ by setting $A(0)=I$,
then $A(t)$ is continuously differentiable on
$[0,1]$ and $A'(0)=0$.
One can show this by the equation of $W(t)$.
Let $B(t)=A(t)-A(t)^{\ast}$, where
$A(t)^{\ast}$ denotes the adjoint operator.
Since $\Rreverse(t)$ is a symmetric matrix, $B(t)$ satisfies 
\begin{eqnarray}
B(t)&=&\frac{1}{t}\int_0^t(I-A(s)^{\ast})B(s)ds+
\frac{1}{t}\int_0^tB(s)(I-A(s))ds.
\end{eqnarray}
Noting that
\begin{eqnarray}
\lefteqn{
\frac{1}{t}\int_0^t(I-A(s)^{\ast})B(s)ds}\nonumber\\
& &=
\frac{I-A(t)^{\ast}}{t}\int_0^tB(s)ds
-\frac{1}{t}\int_0^t\left(A(s)^{\ast}\right)'\left(\int_0^sB(r)dr\right)ds
\end{eqnarray}
and using Gronwall's inequality, we obtain 
$B(t)=0$ for all $t$ which we want to show.
Let $f(t)=W(1-t)$.
Then $f$ satisfies
\begin{equation}
f''(t)+R(t)f(t)=0~~~0\le t\le 1,~~ f(1)=0, f'(1)=I
\end{equation}
and we have
$
\overline{\nabla^2k(\cxy)}_t=
-(1-t)f'(t)f(t)^{-1}
$
and $K(t)=-\frac{1}{1-t}\overline{\nabla^2k(\cxy)}_t=
f'(t)f(t)^{-1}$.
Clearly $f(t)$ has the expansion around $t=1$,
$$
f(t)=(1-t)I+\frac{1}{2}(1-t)^2R(1)+(1-t)^2f_2(t),
$$
where 
$f_2(t)$ is a matrix-valued smooth mapping.
Therefore, when $t$ is close to $1$,
\begin{eqnarray}
\overline{\nabla^2k(\cxy)}_t
&=&
\left(I+(1-t)R(1)+2(1-t)f_2(t)-(1-t)^2f_2'(t)\right)\nonumber\\
& &\times \left(I+\frac{1}{2}(1-t)R(1)+
(1-t)f_2(t)\right)^{-1}.\label{expansion of f}
\end{eqnarray}
Let
\begin{equation}
\tilde{K}(t)=K(t)+\frac{1}{1-t}.\label{decomposition of K}
\end{equation}
Using (\ref{expansion of f}), we see that
$\tilde{K}(t)$ 
$(0\le t\le 1)$ is a matrix-valued smooth mapping.
Let $M(t)$ be the solution to
$$
M'(t)=K(t)M(t),~
M(0)=I.
$$
Let $N(t)$ be the solution to
$$
N'(t)=\tilde{K}(t)N(t),~N(0)=I.
$$
Then $M(t)=(1-t)N(t)$.
Note that $\sup_t(\|N(t)\|_{op}+\|N^{-1}(t)\|_{op})<\infty$.
Also we have 
$M(t)=f(t)f(0)^{-1}$.
Let 
$$
L^2_0([0,1]\to T_xM):=L^2\left\{[0,1]\to T_xM~\Bigg |~\int_0^1\varphi(t)dt=0
\right\}.
$$
We may denote this set by
$L^2_0$ for simplicity.
Then
$
\left(U\varphi\right)(t)=\int_0^t\varphi(s)ds
$
is a bijective linear isometry from
$L^2_0([0,1]\to T_xM)$ to $H^1_0([0,1]\to T_xM)$.
Also $U^{-1}h(t)=\dot{h}(t)$.
Let us introduce an operator
\begin{eqnarray}
\left(S\varphi\right)(t)&=&\varphi(t)-f'(t)f(t)^{-1}\int_0^t\varphi(s)ds,\\
{\rm D}(S)&=&L^2_0.
\end{eqnarray}
By Hardy's inequality, $S$ is a bounded linear operator from
$L^2_0$ to $L^2$.
Also we have the following lemma.

\begin{lem}\label{S and T1}
For any $\varphi\in L^2_0$,
\begin{equation}
\|S\varphi\|^2=
((I+T)\varphi,\varphi),\label{S and T}
\end{equation}
where $I$ denotes the identity operator on $L^2_0$ and
\begin{equation}
T\varphi(t)=-\int_t^1R(s)\int_0^s\varphi(u)duds+
\int_0^1\int_t^1R(s)\int_0^s\varphi(u)dudsdt.
\end{equation}
Also we have
\begin{equation}
(D_0^2E)(\cxy)=U\left(I+T\right)U^{-1},
\end{equation}
where $E$ is the energy function of the path~$(\ref{energy function})$.
\end{lem}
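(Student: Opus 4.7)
The plan is to expand $\|S\varphi\|_{L^2}^2$ directly and reduce the cross term by an integration by parts that exploits the Riccati identity $A'+A^2+R=0$ for $A(t):=f'(t)f(t)^{-1}$; after the cancellations one recognizes what remains as the quadratic form of $T$ via Fubini.

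Write $h(t)=(U\varphi)(t)=\int_0^t\varphi(s)\,ds$, so $h(0)=0$ and, since $\varphi\in L^2_0$, also $h(1)=0$. The identity $\overline{\nabla^2k(\cxy)}_t=-(1-t)f'(t)f(t)^{-1}$, whose symmetry is established earlier in this section, implies that $A(t)$ is symmetric on $[0,1)$, and $f''+Rf=0$ yields $A'+A^2+R=0$. Expanding and using $\int|Ah|^2\,dt=\int(A^2h,h)\,dt$ by symmetry,
$$
\|S\varphi\|_{L^2}^2=\|\varphi\|_{L^2}^2-2\int_0^1(\varphi,Ah)\,dt+\int_0^1(A^2h,h)\,dt.
$$
Since $\frac{d}{dt}(h,Ah)=2(\varphi,Ah)-(h,(R+A^2)h)$, integrating on $[0,1-\epsilon]$, using $h(0)=0$, and letting $\epsilon\to 0$ gives
$$
-2\int_0^1(\varphi,Ah)\,dt=-\lim_{\epsilon\to 0}(h(1-\epsilon),A(1-\epsilon)h(1-\epsilon))-\int_0^1(h,Rh)\,dt-\int_0^1(h,A^2h)\,dt.
$$
The expansion $f(t)=(1-t)I+O((1-t)^2)$ gives $A(t)\sim-(1-t)^{-1}I$, while Cauchy--Schwarz gives $|h(t)|^2\le(1-t)\int_t^1|\varphi|^2\,ds$, so the boundary term is bounded by $C\int_t^1|\varphi|^2\,ds\to 0$. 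The $A^2$ contributions cancel, leaving $\|S\varphi\|_{L^2}^2=\|\varphi\|_{L^2}^2-\int_0^1(h(t),R(t)h(t))\,dt$. A Fubini computation identifies the last integral with $(T\varphi,\varphi)_{L^2_0}$: the constant correction in the definition of $T\varphi$ is orthogonal to $\varphi\in L^2_0$, so
$$
(T\varphi,\varphi)=-\int_0^1\!\int_t^1(R(s)h(s),\varphi(t))\,ds\,dt=-\int_0^1(R(s)h(s),h(s))\,ds,
$$
which proves (\ref{S and T}). Polarizing the same computation gives $(T\varphi,\psi)_{L^2_0}=-(RU\varphi,U\psi)_{L^2}$ and in particular $T$ is symmetric.

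For the second claim, the standard second variation of $E$ at a geodesic yields the index form $(D_0^2E)(\cxy)(X,Y)=\int_0^1[(\nabla_{\dot\gamma}X,\nabla_{\dot\gamma}Y)-(R(X,\dot\gamma)\dot\gamma,Y)]\,dt$ for $H^1$-vector fields along $\cxy$ vanishing at the endpoints. In the parallel trivialization with $X\leftrightarrow h\in H^1_0([0,1]\to T_xM)$, equipped with its natural inner product $(h,k)_{H^1}=\int_0^1(\dot h,\dot k)\,dt$ for which $U:L^2_0\to H^1_0$ is an isometric isomorphism, this reads $\int_0^1[(\dot h,\dot k)-(Rh,k)]\,dt$. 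The polarized Fubini identity then yields
$$
(D_0^2E(\cxy)h,k)_{H^1}=(U^{-1}h,U^{-1}k)_{L^2_0}+(TU^{-1}h,U^{-1}k)_{L^2_0}=((I+T)U^{-1}h,U^{-1}k)_{L^2_0},
$$
which identifies $(D_0^2E)(\cxy)=U(I+T)U^{-1}$.

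The main obstacle is the boundary analysis at $t=1$: both $A(t)$ and the product $(h,Ah)$ are singular there individually, and the vanishing of the boundary term requires combining the precise asymptotic $A(t)\sim-(1-t)^{-1}I$ (from the expansion of $f$ near $t=1$ already recorded in (\ref{expansion of f})) with the Hardy-type vanishing $|h(t)|^2=o(1-t)$ that follows from $h(1)=0$ and $\varphi\in L^2$. Everything else is a clean integration-by-parts plus Fubini once symmetry of $A$ and the Riccati relation are in hand.
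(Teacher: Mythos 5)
Your argument is correct and follows the same route as the paper: expand $\|S\varphi\|^2$, integrate by parts using symmetry of $f'f^{-1}$ and the Riccati identity $A'=-R-A^2$ (coming from $f''+Rf=0$), observe the $|Ah|^2$ terms cancel, verify the boundary term at $t=1$ vanishes via the Hardy/Cauchy--Schwarz bound $|h(t)|^2\le(1-t)\int_t^1|\varphi|^2\,ds$ against $A(t)\sim -(1-t)^{-1}I$, and finish with Fubini. The only additions over the paper's proof are that you make the $\epsilon$-cutoff and the second-variation/index-form identification explicit, whereas the paper compresses the former into the one-line remark $\lim_{t\to1}\frac{1}{1-t}|\int_t^1\varphi|^2=0$ and simply calls the latter ``well-known.''
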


\begin{proof}
We calculate $\|S\varphi\|^2$.
Using $\lim_{t\to 1}\frac{1}{1-t}|\int_t^1\varphi(s)ds|^2=0$
for any $\varphi\in L^2$,
$f''(t)=-R(t)f(t)$ and $(f(t)^{-1})'=-f(t)^{-1}f'(t)f(t)^{-1}$, 
we have
\begin{eqnarray}
\|S\varphi\|^2&=&\|\varphi\|^2-2\int_0^1\left(
f'(t)f(t)^{-1}\int_0^t\varphi(s)ds,\varphi(t)\right)dt\nonumber\\
& &+\int_0^1\left|f'(t)f(t)^{-1}\int_0^t\varphi(s)ds\right|^2dt\nonumber\\
&=&\|\varphi\|^2+\int_0^1\left(
\left(f'(t)f(t)^{-1}\right)'\int_0^t\varphi(s)ds,
\int_0^t\varphi(s)ds\right)dt\nonumber\\
& &+\int_0^1\left|f'(t)f(t)^{-1}\int_0^t\varphi(s)ds\right|^2dt\nonumber\\
&=&
\|\varphi\|^2-\int_0^1
\left(R(t)\int_0^t\varphi(s)ds,\int_0^t\varphi(s)ds\right)
dt\nonumber\\
&=&\left((I+T)\varphi,\varphi\right).\label{hessian and R}
\end{eqnarray}
Finally, it is well-known that
$(D_0^2E)(\cxy)(U\varphi,U\varphi)$
is equal to $\left((I+T)\varphi,\varphi\right)$.
\end{proof}

Let
\begin{equation}
(S_2\varphi)(t)=\varphi(t)+f'(t)\int_0^tf(s)^{-1}\varphi(s)ds.
\end{equation}
Then again by Hardy's inequality
$S_2$ is a bounded linear operator on $L^2$.
Moreover it is easy to see that
$\Im(S_2)\subset L^2_0$,
$SS_2=I_{L^2}$ and $S_2S=I_{L^2_0}$.
Therefore, $S_2=S^{-1}$ and
$\Im(S)=L^2$.
Moreover we have 
$S^{\ast}S=I+T$ by (\ref{S and T}).
Note that by identifying the dual space of a Hilbert space
with the Hilbert space itself using Riesz's theorem,
we view $S^{\ast} : (L^2)^{\ast} \to (L^2_0)^{\ast}$ as
the operator from $L^2$ to $L^2_0$.
We have the following explicit expression
of $S^{-1}$, $S^{\ast}$ and $(S^{-1})^{\ast}$.

\begin{lem}\label{S explicit form}
$(1)$~$S^{-1} : L^2\to L^2_0$, 
$S^{\ast} : L^2\to L^2_0$ are bijective linear isometries and
we have for any $\varphi\in L^2$,
\begin{eqnarray}
\left(S^{-1}\varphi\right)(t)&=&
\varphi(t)+f'(t)\int_0^tf(s)^{-1}\varphi(s)ds\\
\left(S^{\ast}\varphi\right)(t)&=&
\varphi(t)-\int_0^1\varphi(t)dt+\int_0^tf'(s)f(s)^{-1}\varphi(s)ds
\nonumber\\
& &-\int_0^1\left(\int_0^tf'(s)f(s)^{-1}\varphi(s)ds\right)dt.
\end{eqnarray}

\noindent
$(2)$ $(S^{-1})^{\ast}$ is a bijective linear isometry 
from $L^2_0$ to $L^2$.
If we define $(S^{-1})^{\ast}$ is equal to $0$ on the subset
of constant functions, then for any
$\varphi\in L^2$,
\begin{eqnarray}
\left((S^{-1})^{\ast}\varphi\right)(t)&=&
\varphi(t)+
\left(f(t)^{\ast}\right)^{-1}\int_t^1
f(s)^{\ast}f'(s)f(s)^{-1}\varphi(s)ds.\label{Sinverseast}
\end{eqnarray}
Also $(S^{-1})^{\ast}\varphi$ can be written using $M(t)$ and $K(t)$
as
\begin{equation}
\left((S^{-1})^{\ast}\varphi\right)(t)=
\varphi(t)+
(M(t)^{\ast})^{-1}\int_t^1M(s)^{\ast}K(s)\varphi(s)ds.
\end{equation}
\end{lem}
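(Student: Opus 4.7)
The plan exploits two facts already in hand: the identity $S^{-1}=S_2$ established just above the lemma, and the self-adjointness of $K(t):=f'(t)f(t)^{-1}$ on $T_xM$ with respect to the Riemannian inner product. The latter follows from the earlier formula $\overline{\nabla^2 k(\cxy)}_t=-(1-t)K(t)$ together with the fact that the Hessian of a scalar is a symmetric bilinear form and that stochastic parallel translation preserves the inner product, so the trivialized operator on $T_xM$ is self-adjoint. Given this, the first formula in Part~(1) is merely a rewriting of $S_2$.

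For the formula for $S^{\ast}$ I fix $\varphi\in L^2_0$ and $\psi\in L^2$, expand $(S\varphi,\psi)_{L^2}$, and apply Fubini to obtain
\begin{equation*}
\int_0^1\Bigl( K(t)\int_0^t\varphi(s)\,ds,\,\psi(t)\Bigr)dt
=\int_0^1\Bigl(\varphi(s),\int_s^1 K(t)^{\ast}\psi(t)\,dt\Bigr)ds.
\end{equation*}
Using $K(t)^{\ast}=K(t)$ and writing $\int_s^1=\int_0^1-\int_0^s$, I obtain a function $\tilde g(s)=\psi(s)-\int_0^1 K(t)\psi(t)\,dt+\int_0^s K(t)\psi(t)\,dt$ with $(S\varphi,\psi)=(\varphi,\tilde g)$ for every $\varphi\in L^2_0$. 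Since $S^{\ast}\psi$ must itself lie in $L^2_0$, the correct representative is $S^{\ast}\psi=\tilde g-\int_0^1\tilde g(u)\,du$; the two occurrences of the constant $\int_0^1 K(t)\psi(t)\,dt$ cancel, and the remaining expression matches the formula for $S^{\ast}$ in the lemma after relabeling the dummy variable of the outer integral.

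For Part~(2) I dualize $S^{-1}=S_2$ in the same spirit: for $\varphi\in L^2$ and $\psi\in L^2_0$, Fubini gives
\begin{equation*}
(S^{-1}\varphi,\psi)=(\varphi,\psi)+\int_0^1\Bigl(\varphi(s),(f(s)^{\ast})^{-1}\!\int_s^1 f'(t)^{\ast}\psi(t)\,dt\Bigr)ds,
\end{equation*}
and rewriting $f'(t)^{\ast}=f(t)^{\ast}K(t)=f(t)^{\ast}f'(t)f(t)^{-1}$ (again via $K(t)^{\ast}=K(t)$) yields the first displayed formula for $(S^{-1})^{\ast}$. The extend-by-zero convention on constants is consistent with this formula because, for $\varphi\equiv c$, the right-hand side equals $c+(f(t)^{\ast})^{-1}\int_t^1(f(s)^{\ast})'c\,ds=c+(f(t)^{\ast})^{-1}(f(1)^{\ast}-f(t)^{\ast})c=0$, using $f(1)=0$. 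The $M,K$-expression follows by direct substitution of $M(t)=f(t)f(0)^{-1}$, in which the $f(0)^{\ast}$ factors in $(M(t)^{\ast})^{-1}=(f(t)^{\ast})^{-1}f(0)^{\ast}$ and $M(s)^{\ast}=(f(0)^{\ast})^{-1}f(s)^{\ast}$ cancel. Bijectivity of $S^{\ast}$ and $(S^{-1})^{\ast}$ is inherited from that of $S$; the only mildly subtle point in the whole proof is the bookkeeping with the mean-zero projection when identifying $S^{\ast}$ with $\tilde g$, and verifying the extension convention for $(S^{-1})^{\ast}$. Once self-adjointness of $K(t)$ is identified as the structural input, there is no genuine obstacle.
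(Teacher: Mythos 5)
Your proposal is correct and takes essentially the same approach as the paper: identify the self-adjointness of $f'(t)f(t)^{-1}$ (established earlier in Section~3) as the key input, then compute the adjoints by duality (you use Fubini where the paper integrates by parts, but these are the same computation). You fill in the two computations the paper skips as ``almost similar'' and verify the constant-annihilation convention and the $M,K$ rewriting, all correctly.
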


\begin{proof}
All the calculation are almost similar and so
we show how to calculate $(S^{-1})^{\ast}$ only.
Using $(f'(t)f(t)^{-1})^{\ast}=f'(t)f(t)^{-1}$, 
we have for $\varphi\in L^2$ with ${\mathrm supp}~\varphi\subset
(0,1)$ and $\psi\in L^2$,
\begin{eqnarray}
\lefteqn{\left(S^{-1}\varphi,\psi\right)_{L^2}}\nonumber\\
& &=(\varphi,\psi)-\int_0^1
\left(\int_0^tf(s)^{-1}\varphi(s)ds, 
\left(
\int_t^1f(s)^{\ast}f'(s)f(s)^{-1}\psi(s)ds\right)'
\right)dt\nonumber\\
&=&(\varphi,\psi)+
\int_0^1\left(
\varphi(t), \left(f(t)^{-1}\right)^{\ast}
\int_t^1f(s)^{\ast}f'(s)f(s)^{-1}\psi(s)ds
\right)dt.
\end{eqnarray}
This shows (\ref{Sinverseast}).
Also $(S^{-1})^{\ast}\mbox{const}=0$ follows from
$f(1)=0$.
\end{proof}

We summarize the relation between $S$ and $T$ 
in the proposition below.

\begin{pro}\label{S and T2}
$(1)$~
$I+T=S^{\ast}S$,
$(S^{-1})^{\ast}(I+T)=S$
and $(I+T)^{-1}=S^{-1}(S^{-1})^{\ast}$.

$(2)$~
\begin{equation}
\inf\sigma(I+T)=\inf
\left\{\|S\varphi\|^2~|~\|\varphi\|_{L^2}=1,
\varphi\in L^2_0\right\}=
\frac{1}{\|(S^{-1})^{\ast}\|_{op}^2}.
\end{equation}
\end{pro}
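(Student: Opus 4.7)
The plan is to reduce Proposition~\ref{S and T2} entirely to the quadratic-form identity $\|S\varphi\|^2=((I+T)\varphi,\varphi)$ from Lemma~\ref{S and T1} combined with the bijectivity of $S$ recorded in Lemma~\ref{S explicit form}. Everything else is routine Hilbert-space bookkeeping; the only point that needs attention is that $S$ is a map between \emph{different} spaces ($L^2_0$ and $L^2$), so one has to keep careful track of the domains of $S^{\ast}$, $S^{-1}$ and $(S^{-1})^{\ast}$.

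For part (1), I would start by rewriting the identity of Lemma~\ref{S and T1} as $(S^{\ast}S\varphi,\varphi)=\|S\varphi\|^2=((I+T)\varphi,\varphi)$ for every $\varphi\in L^2_0$. Both $S^{\ast}S$ and $I+T$ are bounded self-adjoint operators on $L^2_0$, so polarization forces the operator identity $I+T=S^{\ast}S$. Since Lemma~\ref{S explicit form} guarantees that $S:L^2_0\to L^2$ is a bijection with bounded inverse, $(S^{\ast})^{-1}=(S^{-1})^{\ast}$ is well-defined and we may apply it on the left to get
\begin{equation*}
(S^{-1})^{\ast}(I+T)=(S^{-1})^{\ast}S^{\ast}S=(SS^{-1})^{\ast}S=S.
\end{equation*}
Inverting the identity $I+T=S^{\ast}S$ directly yields $(I+T)^{-1}=S^{-1}(S^{\ast})^{-1}=S^{-1}(S^{-1})^{\ast}$, which finishes (1).

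For part (2), the first equality is the standard variational characterization of the bottom of the spectrum of a bounded self-adjoint operator applied to $I+T$ on $L^2_0$, combined with the quadratic-form identity $((I+T)\varphi,\varphi)=\|S\varphi\|^2$ from Lemma~\ref{S and T1}. For the second equality I would use $\|(S^{-1})^{\ast}\|_{op}=\|S^{-1}\|_{op}$ and the substitution $\psi=S\varphi$, which is legitimate because $S:L^2_0\to L^2$ is surjective, to compute
\begin{equation*}
\|S^{-1}\|_{op}=\sup_{\psi\neq 0}\frac{\|S^{-1}\psi\|_{L^2_0}}{\|\psi\|_{L^2}}=\sup_{\varphi\neq 0}\frac{\|\varphi\|_{L^2_0}}{\|S\varphi\|_{L^2}}=\Bigl(\inf_{\|\varphi\|=1}\|S\varphi\|_{L^2}\Bigr)^{-1},
\end{equation*}
and then squaring and taking reciprocals gives the desired equality. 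There is no genuine obstacle here: all of the real analytic work has already been absorbed into Lemma~\ref{S and T1} (via the Jacobi equation $f''+Rf=0$ and the identification of $I+T$ with the Hessian of $E$) and into the explicit formulas of Lemma~\ref{S explicit form}. The present proposition merely packages those ingredients into the duality identities between the Hessian $I+T$, its ``square root'' $S$, and the operator $(S^{-1})^{\ast}$ which, by the remarks preceding Lemma~\ref{S and T1}, will be identified in Section~4 with the semi-classical limit of the COH coefficient $A(\gamma)_{\la}$.
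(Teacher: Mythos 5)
Your proposal is correct and follows essentially the same route as the paper: derive $I+T=S^{\ast}S$ from the quadratic-form identity of Lemma~\ref{S and T1}, then obtain the remaining operator identities by applying $(S^{-1})^{\ast}=(S^{\ast})^{-1}$, and deduce the spectral characterization in (2) as a direct consequence. The paper's own proof is just a terser version of the same argument (it simply states ``(2) follows from (1)'' where you spell out the variational characterization and the substitution $\psi=S\varphi$).
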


\begin{proof}
$I+T=S^{\ast}S$ follows from Lemma~\ref{S and T1}.
$(I+T)^{-1}=S^{-1}(S^{-1})^{\ast}$ follows from
$(S^{-1})^{\ast}=(S^{\ast})^{-1}$.
(2) follows from (1).
\end{proof}

The relation $\inf\sigma((I+T))\le \frac{1}{\|(S^{-1})^{\ast}\|_{op}^2}$
plays important role to prove the lower bound estimate
in Theorem~\ref{main theorem}, while
the relation $\inf\sigma(I+T)\ge\inf
\left\{\|S\varphi\|^2~|~\|\varphi\|_{L^2}=1,
\varphi\in L^2_0\right\}$ is used to prove the upper bound estimate.

\section{Proof of Main Theorem}

\begin{lem}\label{ground state}
Let $\Omega$ be the normalized ground state function
of $-L_{\la}$.
Then $\|\Omega-1\|_{L^2(P_{x,y}(M),\nu^{\la}_{x,y})}\le C e^{-C'\la}$,
where $C,C'$ are positive constants.
\end{lem}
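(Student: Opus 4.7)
The plan is to show that $\Omega$ is exponentially close to the constant function $1$ by combining three ingredients: (a) the bottom Dirichlet eigenvalue $e^\la_{Dir,1,P_{x,y}({\mathcal D})}$ is itself exponentially small in $\la$; (b) the Poincar\'e inequality (\ref{poincare1}) then forces $\Omega$, in $L^2(\bar{\nu}^\la_{x,y})$, to be within $O(e^{-C'\la})$ of its weighted mean; (c) the normalization $\|\Omega\|_{L^2(\bar{\nu}^\la_{x,y})}=1$, non-negativity of $\Omega$, and the exponential concentration $Z_\la:=\nu^\la_{x,y}(P_{x,y}({\mathcal D}))=1-O(e^{-C'\la})$ pin this mean at $1-O(e^{-C'\la})$. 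The claim about $\|\Omega-1\|_{L^2(P_{x,y}(M),\nu^\la_{x,y})}$ then follows by splitting the integral over $P_{x,y}({\mathcal D})$ and its complement, using that $\Omega$ vanishes outside $P_{x,y}({\mathcal D})$.

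For (a) and for the estimate on $Z_\la$ I would run a single cutoff/large-deviation argument. Fix $\delta>0$ small enough that the uniform tube ${\mathcal T}_\delta=\{\gamma:\sup_{0\le t\le 1}d(\gamma(t),\cxy(t))<\delta\}$ lies inside $P_{x,y}({\mathcal D})$, and construct a smooth cutoff $F_\delta\in H^{1,2}_0(P_{x,y}({\mathcal D}))$ that is identically $1$ on ${\mathcal T}_{\delta/2}$, vanishes outside ${\mathcal T}_\delta$, and has $|D_0F_\delta|$ uniformly bounded; for concreteness one may take $F_\delta=\rho\bigl(\int_0^1 d(\gamma(t),\cxy(t))^2\,dt\bigr)$ with $\rho\in C^\infty_c([0,\delta^2))$ equal to $1$ on $[0,\delta^2/4]$, so that $\mathrm{supp}(D_0F_\delta)\subset\{\Phi\ge\delta^2/4\}\subset{\mathcal T}_{\delta/2}^c$. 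Schilder-type large deviations for $\nu^\la_{x,y}$, whose rate functional is $E(\gamma)-E(\cxy)$ on $H^1$-paths, combined with Assumption~1 (which makes $\cxy$ the unique $H^1$-minimizer of $E$ in the closure of $P_{x,y}({\mathcal D})$), yield $\nu^\la_{x,y}({\mathcal T}_{\delta/2}^c)\le Ce^{-C'\la}$. Consequently both the Rayleigh quotient of $F_\delta$ and $1-Z_\la$ are $O(e^{-C'\la})$, giving (a).

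Next, $\Omega$ being the normalized ground state satisfies $\|D_0\Omega\|^2_{L^2(\bar{\nu}^\la_{x,y})}=e^\la_{Dir,1,P_{x,y}({\mathcal D})}$. Applying (\ref{poincare1}) with $F=\Omega$ and using $\sup_{\la\ge\la_0}\xi_\la<\infty$ from Theorem~\ref{coh formula}(1) yields
\[
\|\Omega-c_\la\|^2_{L^2(\bar{\nu}^\la_{x,y})}\le\frac{\xi_\la}{\la}\,e^\la_{Dir,1,P_{x,y}({\mathcal D})}=O(e^{-C'\la}),
\]
where $c_\la:=E^{\nu^\la_{x,y}}[\Omega]=Z_\la E^{\bar{\nu}^\la_{x,y}}[\Omega]$. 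Writing $m_\la:=E^{\bar{\nu}^\la_{x,y}}[\Omega]\ge 0$ and expanding the square using $\|\Omega\|^2_{L^2(\bar{\nu}^\la_{x,y})}=1$ gives the identity $1-m_\la^2\,Z_\la(2-Z_\la)=O(e^{-C'\la})$; since $Z_\la(2-Z_\la)=1-O(e^{-C'\la})$, this forces $m_\la=1-O(e^{-C'\la})$, hence $c_\la=1-O(e^{-C'\la})$. By the triangle inequality $\|\Omega-1\|^2_{L^2(\bar{\nu}^\la_{x,y})}=O(e^{-C'\la})$, and the identity
\[
\|\Omega-1\|^2_{L^2(\nu^\la_{x,y})}=Z_\la\,\|\Omega-1\|^2_{L^2(\bar{\nu}^\la_{x,y})}+(1-Z_\la),
\]
which uses $\Omega\equiv 0$ on $P_{x,y}({\mathcal D})^c$, yields the stated exponential bound.

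The main obstacle is the quantitative exit estimate $\nu^\la_{x,y}({\mathcal T}_{\delta/2}^c)\le Ce^{-C'\la}$, which is exactly what converts the weak convergence $\nu^\la_{x,y}\to\delta_{\cxy}$ into an explicit exponential rate. Under Assumption~1 the geodesic $\cxy$ is the unique minimizer of the energy $E$ in the closure of $P_{x,y}({\mathcal D})$, leaving a positive action gap, so the estimate is standard via the Schilder--Varadhan LDP adapted to pinned Brownian motion on a manifold; nonetheless careful citation of this result on the pinned path space is the delicate point. Once that exit bound is in place, the remainder is a routine combination of Theorem~\ref{coh formula}(1),(4) with elementary algebra on $\|\Omega\|^2_{L^2(\bar{\nu}^\la_{x,y})}=1$.
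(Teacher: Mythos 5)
Your argument follows the same route as the paper's: build a cutoff supported near $\cxy$ to show via Schilder-type large deviations that both $1-Z_\la$ and $e^{\la}_{Dir,1,P_{x,y}({\mathcal D})}$ are $O(e^{-C'\la})$, then apply the Poincar\'e inequality (\ref{poincare1}) to $\Omega$ and finish with elementary algebra on the normalization. The only differences are cosmetic — you use the cutoff $\rho\bigl(\int_0^1 d(\gamma(t),\cxy(t))^2\,dt\bigr)$ where the paper uses $\chi(\max_t d(\gamma(t),\cxy(t)))$, and you carry $Z_\la$ explicitly whereas the paper normalizes $\Omega$ in $L^2(\nu^{\la}_{x,y})$, absorbing $Z_\la$ from the start.
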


\begin{proof}
Let $\varphi_{\delta}(\gamma)=\chi(\max_{0\le t\le 1}d(\gamma(t),\cxy(t)))$,
where $\chi$ is a non-negative smooth function such that 
$\chi(u)=1$ for $|u|\le \delta$ and $\chi(u)=0$
for $|u|\ge 2\delta$.
Here $\delta$ is a sufficiently small positive number.
Then $\|\varphi_{\delta}\|_{L^2(\nu^{\la}_{x,y})}\ge 1-Ce^{-C'\la}$
and
$
\|D_0\varphi_{\delta}\|_{L^2(\nu^{\la}_{x,y})}
\le Ce^{-C'\la}.
$
Hence 
\begin{equation}
e^{\la}_{Dir,1,P_{x,y}({\mathcal D})}\le 
Ce^{-\la C'}.
\end{equation}
By the COH formula,
$$
\|\Omega-\left(\Omega,1\right)
_{L^2({\nu}^{\la}_{x,y})}\|_{L^2(\nu^{\la}_{x,y})}
\le Ce^{-C'\la}.
$$
This implies 
$$
1-\left(\Omega,1\right)_{L^2({\nu}^{\la}_{x,y})}^2
=\left(\Omega,\Omega-\left(\Omega,1\right)_{L^2({\nu}^{\la}_{x,y})}\right)
_{L^2(\nu^{\la}_{x,y})}
\le Ce^{-C'\la}
$$
which shows
$\|\Omega-1\|_{L^2(P_{x,y}(M),\nu^{\la}_{x,y})}^2\le
2Ce^{-C'\la}$.
\end{proof}

\begin{lem}\label{perturbation of M}
As already defined, let
$K(t)=-\frac{\overline{\nabla^2k(c_{x,y})}_t}{1-t}$.
Also we consider a perturbation
of $K(t)$ such that
$$
K_{\ep}(t)=K(t)+\frac{C_{\ep}(t)}{(1-t)^{\delta}},
$$
where $0<\delta<1$ is a constant and
$C_{\ep}(t)$~$(0\le \ep\le 1)$ be a symmetric matrices valued
function satisfying $\sup_{t}\|C_{\ep}(t)\|\le \ep$.
Let
$M_{\ep}(t)$ be the solution to
\begin{eqnarray}
M_{\ep}'(t)&=&K_{\ep}(t)M_{\ep}(t)\\
M_{\ep}(0)&=&I
\end{eqnarray}
Define
\begin{eqnarray}
J_{\ep}\varphi(t)&=&
(M_{\ep}(t)^{\ast})^{-1}\int_t^1
M_{\ep}(s)^{\ast}K_{\ep}(s)\varphi(s)ds.
\end{eqnarray}
Then for sufficiently small $\ep$,
there exists a positive constant $C$ which is independent of
$\ep$ such that
\begin{equation}
\|J_{\ep}-J_0\|_{op}\le C\ep.\label{difference of J}
\end{equation}
Also $J_0=(S^{-1})^{\ast}$ holds.
\end{lem}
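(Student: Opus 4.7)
The claim has two parts: the identification $I+J_0=(S^{-1})^{\ast}$ (this is how I read ``$J_0=(S^{-1})^{\ast}$'' in the statement, matching the earlier parenthetical ``$I+J_0$ (or $(S^{-1})^{\ast}$)'') and the bound $\|J_{\ep}-J_0\|_{op}\le C\ep$. The identification follows instantly from Lemma~\ref{S explicit form}(2): comparing
$$J_0\varphi(t)=(M(t)^{\ast})^{-1}\int_t^1 M(s)^{\ast}K(s)\varphi(s)\,ds$$
with the second displayed formula for $(S^{-1})^{\ast}$ gives $(S^{-1})^{\ast}=I+J_0$ on $L^2$.

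For the perturbation bound, the natural quantity is the propagator ratio $Q_{\ep}(t):=M_0(t)^{-1}M_{\ep}(t)$, which satisfies $Q_{\ep}(0)=I$ and
$$
Q_{\ep}'(t)=\frac{M_0(t)^{-1}C_{\ep}(t)M_0(t)}{(1-t)^{\delta}}Q_{\ep}(t).
$$
The decisive factorisation $M_0(t)=(1-t)N(t)$ with $\sup_t(\|N(t)\|_{op}+\|N(t)^{-1}\|_{op})<\infty$ (recorded in Section 3) forces $M_0(t)^{-1}C_{\ep}(t)M_0(t)=N(t)^{-1}C_{\ep}(t)N(t)$ to have operator norm $\le C\ep$ uniformly in $t\in[0,1)$. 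Since $\int_0^1(1-t)^{-\delta}dt<\infty$, a standard Gronwall argument delivers $\sup_t\|Q_{\ep}(t)-I\|\le C\ep$ and, for $\ep$ small, $\sup_t\|Q_{\ep}(t)^{-1}-I\|\le C\ep$.

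Using $M_{\ep}^{\ast}=Q_{\ep}^{\ast}M_0^{\ast}$, I would then split
\begin{align*}
J_{\ep}\varphi(t)-J_0\varphi(t) &= (M_0(t)^{\ast})^{-1}\bigl[(Q_{\ep}(t)^{\ast})^{-1}-I\bigr]\int_t^1 M_{\ep}(s)^{\ast}K_{\ep}(s)\varphi(s)\,ds\\
&\quad + (M_0(t)^{\ast})^{-1}\int_t^1\Bigl[(Q_{\ep}(s)^{\ast}-I)M_0(s)^{\ast}K(s)+Q_{\ep}(s)^{\ast}M_0(s)^{\ast}\tfrac{C_{\ep}(s)}{(1-s)^{\delta}}\Bigr]\varphi(s)\,ds,
\end{align*}
and estimate each line by combining two facts. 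First, every bracketed factor next to $M_0(s)^{\ast}K(s)$ or to $M_0(s)^{\ast}C_{\ep}(s)/(1-s)^{\delta}$ is pointwise $\le C\ep$: the matrix $M_0(s)^{\ast}K(s)=(f(0)^{-1})^{\ast}f(s)^{\ast}f'(s)f(s)^{-1}$ is continuous up to $s=1$ by the expansion of $f$, and the singular piece is tamed since $\|M_0(s)^{\ast}\|\le C(1-s)$ beats $(1-s)^{-\delta}$ when $\delta<1$. Second, the envelope $\psi\mapsto (M_0(t)^{\ast})^{-1}\int_t^1\psi(s)\,ds$ is bounded on $L^2$ because $(M_0(t)^{\ast})^{-1}\sim (1-t)^{-1}$, precisely the Hardy weight; the classical Hardy inequality $\int_0^1(1-t)^{-2}\bigl|\int_t^1\psi\bigr|^2dt\le 4\int_0^1|\psi|^2ds$ then yields $\|J_{\ep}\varphi-J_0\varphi\|_{L^2}\le C\ep\|\varphi\|_{L^2}$.

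The main obstacle is reconciling the two singularities at $t=1$: the principal $(1-t)^{-1}$-singularity of $K$ from which the Hardy geometry arises, and the perturbation singularity $(1-t)^{-\delta}$ which is merely integrable. Everything rests on the linear vanishing $M_0(t)=(1-t)N(t)$: it neutralises the inner product $M_0^{-1}C_{\ep}M_0$ (so Gronwall applies) and simultaneously tempers the singular part of $K_{\ep}-K$ once paired with $M_0^{\ast}$, so no delicate cancellation beyond careful bookkeeping of these factors is required.
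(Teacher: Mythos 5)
Your proof is correct and follows essentially the same route as the paper: normalize away the $(1-t)$ singularity (your $Q_{\ep}=M_0(t)^{-1}M_{\ep}(t)$ is precisely the paper's $N_0(t)^{-1}N_{\ep}(t)$ since $M_{\ep}=(1-t)N_{\ep}$), control it by Gronwall using the integrability of $(1-t)^{-\delta}$ for $\delta<1$, and close with Hardy's inequality at $t=1$. You are also right that the intended identification is $I+J_0=(S^{-1})^{\ast}$ --- the displayed ``$J_0=(S^{-1})^{\ast}$'' is a typo, consistent with the parenthetical ``$I+J_0$ (or $(S^{-1})^{\ast}$)'' in Section~3 and with Lemma~\ref{S explicit form}(2) --- and your explicit decomposition of $J_{\ep}-J_0$, with the pointwise bounds on $M_0(s)^{\ast}K(s)$ and $M_0(s)^{\ast}C_{\ep}(s)(1-s)^{-\delta}$, spells out the Hardy step that the paper compresses to a single sentence.
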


\begin{proof}
First, we recall that
$K(t)$ is the sum of $-\frac{1}{1-t}I$ and $\tilde{K}(t)$
as in (\ref{decomposition of K}).
Taking this into account, we rewrite
$$
K_{\ep}(t)=-\frac{1}{1-t}+\tilde{K}_{\ep}(t),
$$
where $\tilde{K}_{\ep}(t)=\tilde{K}(t)+\frac{C_{\ep}(t)}{(1-t)^{\delta}}$.
Let $N_{\ep}(t)$ be the solution to
\begin{equation}
N_{\ep}'(t)=\tilde{K}_{\ep}(t)N_{\ep}(t),~ N_{\ep}(0)=I.
\end{equation}
Note that $\tilde{K}_0(t)=\tilde{K}(t)$ and
$N_{0}(t)=N(t)$.
Then $M_{\ep}(t)=(1-t)N_{\ep}(t)$.
To estimate $J_{\ep}-J_0$, we need to estimate
$N_{\ep}-N_0$.
Note that
$$
N_{\ep}(t)=N_0(t)\left(I+\int_0^tN_0(s)^{-1}
\frac{C_{\ep}(s)}{(1-s)^{\delta}}ds\right).
$$
This implies
$$
\sup_t|N_{\ep}(t)-N_0(t)|\le C\ep.
$$
The constant $C$ depends $\delta$.
$N_{\ep}(t)^{-1}-N_{0}(t)^{-1}$
has also similar estimates.
By this estimate and Hardy's inequality, we complete the
proof of (\ref{difference of J}).
\end{proof}

Let us apply the lemma above in the case where
$K_{\ep}(t)=K(\gamma)_{\la,t}$.
In this case, we have
\begin{eqnarray}
\lefteqn{K(\gamma)_{\la,t}}\nonumber\\
& &=K(t)+
\frac{1}{1-t}
\left(\frac{1-t}{\la}\overline{\nabla^2\log
p\left(\frac{1-t}{\la},y,\gamma\right)}_t+
\overline{\nabla^2k(\cxy)}_t\right)-
\frac{1}{2\la}\Ricbar\nonumber\\
& &=K(t)+
\frac{1}{1-t}\left(\frac{1-t}{\la}\overline{\nabla^2\log
p\left(\frac{1-t}{\la},y,\gamma\right)}_t+
\overline{\nabla^2k(\gamma)}_t\right)\nonumber\\
& &\quad +\frac{1}{1-t}\left(\overline{\nabla^2k(\cxy)}_t-
\overline{\nabla^2k(\gamma)}_t\right)
-\frac{1}{2\la}\Ricbar.
\end{eqnarray}
Hence
\begin{eqnarray}
C_{\ep}(t)&=&
\frac{1}{(1-t)^{1-\delta}}
\left(
\frac{1-t}{\la}\overline{\nabla^2\log
p\left(\frac{1-t}{\la},y,\gamma\right)}_t
+\overline{\nabla^2k(\gamma)}_t\right)\nonumber\\
& &+\frac{1}{(1-t)^{1-\delta}}
\left(\overline{\nabla^2k(\cxy)}_t-\overline{\nabla^2k(\gamma)}_t\right)
\nonumber\\
& &-\frac{(1-t)^{\delta}}{2\la}\Ricbar.\label{Cept}
\end{eqnarray}

Now, we are in a position to prove our main theorem.

\begin{proof}[Proof of Theorem~$\ref{main theorem}$]
Let $l_{\xi}(t)=t\xi$.
Let $\chi$ be a non-negative smooth function such that
$\chi(t)=1$ for $t\le 1$ and
$\chi(t)=0$ for $t\ge 2$.
Let $\kappa>0$ and
$
\chi_{1,\kappa}(\gamma)=
\chi(\kappa^{-1}\|\overline{b-l_{\xi}}\|_{T^2,B,2m,\theta}^{2m})
$
and $\chi_{2,\kappa}(\gamma)=\left(1-\chi_{1,\kappa}(\gamma)^2\right)^{1/2}$.
Here $\|~\|_{T^2,B,2m,\theta}$~$(0<\theta<1, 
\mbox{$m$ is a large positive integer})$ denotes the norm for
the Brownian rough path $\overline{b-l_{\xi}}$ over $b-l_{\xi}$.
See Definition~7.2 in \cite{aida-semiclassical}.
Actually, we need quasi-sure version of Brownian rough path as in
Theorem~3.1 in \cite{aida-loop group} because
we are considering the pinned Brownian motion measure.
By Lemma~7.11 in \cite{aida-semiclassical}, 
there exists a positive constant $C_{\kappa}$ such that
\begin{equation}
|D_0\chi_{1,\kappa}(\gamma)|_{H^1_0}+
|D_0\chi_{2,\kappa}(\gamma)|_{H^1_0}\le C_{\kappa}
\qquad \nu_{x,y}^{\la}-a.s. \gamma.
\label{estimate on cut-off}
\end{equation}
First we prove the upper bound estimate.
Let us fix a positive number $\ep>0$.
Let us choose $\varphi_{\ep}\in L^2_0$ with $\|\varphi_{\ep}\|=1$ 
such that
\begin{equation}
\max\left(
\left|\|(I+T)\varphi_{\ep}\|-e_0\right|,
\left|\|S\varphi_{\ep}\|^2-e_0\right|
\right)\le \ep.\label{varphiep}
\end{equation}
This is possible because of 
Lemma~\ref{S and T1} and Proposition~\ref{S and T2}.
Let $\psi_{\ep}=U\varphi_{\ep}$.
Let
$F_{\ep}(\gamma)=\sqrt{\la}\left(b-l_{\xi},\psi_{\ep}\right)_{H^1_0}$
and $\tilde{F}_{\ep}=F_{\ep}\chi_{1,\kappa}$.
Note that
$$
(D_0)_{h}b(t)=
h(t)+\int_0^t\int_0^s\overline{R(\gamma)}_u(h(s),\circ db(u))(\circ db(s)).
$$
Hence
\begin{eqnarray}
D_0F_{\ep}(\gamma)_t'&=&
\sqrt{\la}\psi_{\ep}'(t)+
\sqrt{\la}\int_t^1\overline{R(\gamma)}_s\left(
\int_s^1\varphi_{\ep}(r)db^i(r),\circ db(s)\right)\ep_i\nonumber\\
& &-\sqrt{\la}t\int_0^1\int_t^1\overline{R(\gamma)}_s\left(
\int_s^1\varphi_{\ep}(r)db^i(r),\circ db(s)\right)\ep_idt\nonumber\\
&=&\sqrt{\la}\psi_{\ep}'(t)-
\sqrt{\la}\int_t^1R(s)\left(
\psi_{\ep}(s),\xi\right)\xi ds\nonumber\\
& &+\sqrt{\la}\int_0^1\int_t^1R(s)\left(
\psi_{\ep}(s),\xi\right)\xi dsdt
+I(\la)\nonumber\\
&=&\sqrt{\la}(I+T)(\varphi_{\ep})(t)+I(\la).
\end{eqnarray}
By applying the Cameron-Martin formula to the translation
$b\to b+l_{\xi}$ and using the similar argument to
page 33 in \cite{watanabe},
we obtain 
$E^{\nu^{\la}_{x,y}}[|I(\la)|^p]\le C_{p,\kappa}$.
Here the constant $C_{p,\kappa}$ is independent of
$\la$.
Also we have
$E[|D_0\chi_{1,\kappa}|_{H^1_0}^2]\le Ce^{-C'\la}$
for some $C,C'>0$ which follows from the Schilder type
large deviation principle.
This implies
\begin{equation}
\|D_0\tilde{F}_{\ep}\|_{L^2(\nu^{\la}_{x,y})}^2
=\la\|(I+T)\varphi_{\ep}\|_{L^2}^2+
o(\la).
\end{equation}
Combining $\|D_0\Omega\|\le Ce^{-C'\la}$,
we obtain
\begin{equation}
\|D_0\tilde{F}_{\ep}-
\left(\tilde{F}_{\ep},\Omega\right)D_0\Omega\|_{L^2(\nu^{\la}_{x,y})}^2
=\la\|(I+T)\varphi_{\ep}\|_{L^2}^2+
o(\la).\label{Dirichlet norm}
\end{equation}
On the other hand, by the continuity theorem of rough path,
there exists $\ep'>0$ such that
for 
$\|\overline{b-l_{\xi}}\|_{T^2,B,2m,\theta}\le \ep'$, 
\begin{equation}
\|\overline{\nabla^2k(\cxy)}_t-\nabla^2k(\gamma)_t\|
\le \ep |1-t|^{\theta/2}.
\end{equation}
By Lemma~3.3 in \cite{gong-ma}, 
for any compact subset $F\subset {\rm Cut}(y)^c$,
\begin{eqnarray}
\sup_{z\in F}\left|
t\nabla^2\log p(t,z,y)-\nabla^2k(z)\right|
\le C_F t^{1/2}.
\end{eqnarray}
Therefore, by setting $\kappa$ to be sufficiently small,
$C_{\ep}(t)$ which is defined in (\ref{Cept})
satisfies the assumption in Lemma~\ref{perturbation of M}
for certain $\delta>1/2$.
Hence, by Lemma~\ref{perturbation of M},
by taking $\kappa$ to be sufficiently small,
\begin{eqnarray}
A(\gamma)_{\la}(D_0\tilde{F}_{\ep}(\gamma)')_t&=&
\left(S^{-1}\right)^{\ast}\left(D_0\tilde{F}_{\ep}(\gamma)'\right)_t
+(J(\gamma)_{\la}-J_0)(D_0\tilde{F}_{\ep}(\gamma)')_t
\nonumber\\
&=&
\sqrt{\la}S\varphi_{\ep}(t)+I_2(\la),
\end{eqnarray}
where $|I_2(\la)|_{L^2([0,1])}\le C \ep \sqrt{\la}$.
Also we have used that $(S^{-1})^{\ast}(I+T)=S$.
This estimate, (\ref{estimate on cut-off})
and the COH formula (\ref{coh1}) implies that
\begin{equation}
\|\tilde{F}_{\ep}-E^{\nu^{\la}_{x,y}}
[\tilde{F}_{\ep}]\|_{L^2(\nu^{\la}_{x,y})}^2
=\|S\varphi_{\ep}\|^2+C \ep.
\end{equation}
Using Lemma~\ref{ground state},
\begin{equation}
\|\tilde{F}_{\ep}-
\left(\tilde{F}_{\ep},\Omega\right)\Omega\|_{L^2(\nu^{\la}_{x,y})}^2
=\|S\varphi_{\ep}\|^2+C \ep+Ce^{-C'\la}.\label{L2 norm}
\end{equation}
By using the estimates (\ref{Dirichlet norm}), (\ref{L2 norm})
and (\ref{varphiep}),
we complete the proof of the upper bound.
We prove lower bound.
Take $F\in H^{1,2}_0(P_{x,y}({\mathcal D}))$
such that $\|F\|_{L^2(\bar{\nu}^{\la}_{x,y})}=1$ and
$(F,\Omega)=0$.
By the IMS localization formula,
\begin{equation}
{\mathcal E}(F,F)=
\sum_{i=1,2}{\mathcal E}(F\chi_{i,\kappa},F\chi_{i,\kappa})
-\sum_{i=1,2}
E^{\bar{\nu}^{\la}_{x,y}}[|D_0\chi_{i,\kappa}|^2F^2].
\end{equation}
By Lemma~\ref{ground state},
\begin{equation}
|E^{\nu^{\la}_{x,y}}[F]|
=|E^{\nu^{\la}_{x,y}}[F(1-\Omega)]|
\le \|1-\Omega\|\le Ce^{-C'\la}.
\end{equation}
Again by taking $\kappa$ to be sufficiently small,
by Lemma~\ref{perturbation of M} and the COH formula
\begin{eqnarray}
\lefteqn{
\|F\chi_{1,\kappa}-E^{\nu^{\la}_{x,y}}[F\chi_{1,\kappa}]\|_{L^2(P_{x,y}(M))}^2}
\nonumber\\
& &\le
\frac{\left(\|(S^{-1})^{\ast}\|_{op}+C\ep\right)^2}{\la}
E^{\nu^{\la}_{x,y}}\left[
|D_0(F\chi_{1,\kappa})|^2
\right].
\end{eqnarray}
Consequently,
we have
\begin{equation}
\|F\chi_{1,\kappa}\|_{L^2(P_{x,y}({\mathcal D}))}^2
\le
\frac{\left(\|(S^{-1})^{\ast}\|_{op}+C\ep\right)^2}{\la}
E^{\nu^{\la}_{x,y}}\left[
|D_0(F\chi_{1,\kappa})|^2
\right]+Ce^{-C'\la}.\label{Fchi1}
\end{equation}
Now we estimate the Dirichlet norm of $F\chi_{2,\kappa}$.
The log-Sobolev inequality (\ref{lsi1}) implies that
there exists a positive constant $C$ such that
for any $F\in H^{1,2}_0(P_{x,y}({\mathcal D}))$ and
bounded measurable function $V$,
\begin{eqnarray}
{\mathcal E}(F,F)+E^{\bar{\nu}^{\la}_{x,y}}
\left[\la^2 VF^2\right]
\ge
-\frac{\la}{C}\log E^{\bar{\nu}^{\la}_{x,y}}
\left[e^{-C\la V}\right]\|F\|_{L^2(\bar{\nu}^{\la}_{x,y})}^2.
\label{GNS}
\end{eqnarray}
We apply this inequality.
Take a non-negative smooth function $\tilde{\chi}$ on $\RR$ such that
$\tilde{\chi}=1$ on a neighborhood of 
the support of $1-\chi^2$ and $\tilde{\chi}=0$ on
a neighborhood of $0$.
Let $\delta$ be a sufficiently small positive number $\delta$
and define
$$
V(\gamma)=\delta \tilde{\chi}
(\kappa^{-1}\|\overline{b-l_{\xi}}\|_{T^2,B,2m,\theta}^{2m}).
$$
By (\ref{GNS}), there exists $\delta'>0$ such that
\begin{eqnarray}
\lefteqn{{\mathcal E}(F\chi_{2,\kappa},F\chi_{2,\kappa})}\nonumber\\
& &=
{\mathcal E}(F\chi_{2,\kappa},F\chi_{2,\kappa})-\la^2
E^{\bar{\nu}^{\la}_{x,y}}\left[V(F\chi_{2,\kappa})^2\right]
+\la^2
E^{\bar{\nu}^{\la}_{x,y}}\left[V(F\chi_{2,\kappa})^2\right]\nonumber\\
&\ge&
-\frac{\la}{C} \log E^{\bar{\nu}^{\la}_{x,y}}
\left[e^{C\la V}\right]\|F\chi_{2,\kappa}\|^2
+\la^2\delta\|F\chi_{2,\kappa}\|^2\nonumber\\
&\ge&
-\frac{\la}{C}\log\left(1+e^{-\la\delta'}\right)\|F\chi_{2,\kappa}\|^2
+\la^2\delta\|F\chi_{2,\kappa}\|^2\nonumber\\
&\ge&(\la^2\delta-C\la e^{-\la\delta'})\|F\chi_{2,\kappa}\|^2.\label{Fchi2}
\end{eqnarray}
By the estimates (\ref{estimate on cut-off}), (\ref{Fchi1}),
(\ref{Fchi2}) and by Proposition~\ref{S and T2},
we complete the proof.
\end{proof}


\begin{thebibliography}{99}
%
\bibitem{aida-coh}
S.~Aida,
Logarithmic derivatives of heat kernels and logarithmic Sobolev inequalities 
with unbounded diffusion coefficients on loop spaces.  
J. Funct. Anal.  174  (2000),  no. 2, 430--477. 
%
\bibitem{aida-semiclassical}
S.~Aida,
Semi-classical limit of the bottom of spectrum of a Schr\"odinger operator on 
a path space over a compact Riemannian manifold.  
J. Funct. Anal.  251  (2007),  no. 1, 59--121. 
%
\bibitem{aida-coh2}
S.~Aida,
COH formula and Dirichlet Laplacians
on small domains of pinned path spaces,
to appear in Contemporary Mathematics,
as a proceedings of the workshop 
"Concentration, Functional Inequalities, and Isoperimetry". 
%
\bibitem{aida-loop group}
S.~Aida,
Vanishing of one dimensional $L^2$-cohomologies of
loop groups, submitted, 2009.
%
\bibitem{chl}
M.~Captaine, E.~Hsu and M.~Ledoux,
Martingale representation and a simple proof of 
logarithmic Sobolev inequalities on path spaces.  
Electron. Comm. Probab.  2  (1997), 71--81 
%
\bibitem{clw1}
X.~Chen, X.-M.~Li and B.~Wu,
A Poincaré inequality on loop spaces.  J. Funct. Anal.  259  (2010),  
no. 6, 1421–1442.
%
\bibitem{clw2}
X.~Chen, X.-M.~Li and B.~Wu,
A concrete estimate for the weak Poincar\'e
inequality on loop space,
Probab.Theory Relat. Fields,
DOI: 10.1007/s00440-010-0308-5
%
\bibitem{eberle1}
A.~Eberle, 
Absence of spectral gaps on a class of loop spaces.  
J. Math. Pures Appl. (9)  81  (2002),  no. 10, 915--955. 
%
\bibitem{eberle2}
A.~Eberle,
Spectral gaps on discretized loop spaces.  
Infin. Dimens. Anal. Quantum Probab. Relat. Top.  6  (2003),  no. 2, 265--300.
%
\bibitem{eberle3}
A.~Eberle,
Local spectral gaps on loop spaces.  
J. Math. Pures Appl. (9)  82  (2003),  no. 3, 313--365.
%
\bibitem{gong-ma}
F.~Gong and Z.~Ma,
The log-Sobolev inequality on loop space over a compact Riemannian manifold.  
J. Funct. Anal.  157  (1998),  no. 2, 599--623. 
%
\bibitem{ms}
P.~Malliavin and D.W.~Stroock,
Short time behavior of the heat kernel and its logarithmic derivatives.  
J. Differential Geom.  44  (1996),  no. 3, 550--570.
%
\bibitem{watanabe}
S.~Watanabe,
Analysis of Wiener functionals (Malliavin calculus) and its applications
to heat kernels,
Ann. of Probab. Vol. 15, No.1, (1987), 1--39.
%
\end{thebibliography}
\end{document}